\newtheorem{thm}{Theorem}[section]
\newtheorem{lem}[thm]{Lemma}
\date{}
\begin{document}
\begin{center}
{\Large Connected order ideals and $P$-partitions}
\end{center}

\begin{center}
Ben P. Zhou \\[8pt]
Center for Combinatorics, LPMC\\
Nankai University\\
Tianjin 300071, P.R. China\\[6pt]
benpzhou@163.com
\end{center}

\begin{abstract}
Given a finite poset $P$, we associate a simple graph denoted by $G_P$ with all connected order ideals of $P$ as vertices, and two vertices are adjacent if and only if they have nonempty intersection and are incomparable with respect to set inclusion. We establish a bijection between  the set of   maximum independent sets of $G_P$ and the set of $P$-forests, introduced by F\'eray and Reiner in their study of the fundamental generating function $F_P(\textbf{x})$ associated with $P$-partitions. Based on this bijection, in the cases when $P$  is naturally labeled we show that $F_P(\textbf{x})$ can factorise,  such that each factor is a summation of rational functions  determined by  maximum independent sets of a connected component of $G_P$.
This approach enables us to give an alternative proof for F\'eray and Reiner's nice formula of $F_P(\textbf{x})$ for the case of $P$ being a naturally labeled forest with duplications.
Another consequence of our result is a product formula to compute the number of linear extensions of $P$.
\end{abstract}

\noindent
{\bf Keywords}:  $P$-partition, $P$-forest, linear extension,  connected order  ideal, maximum independent set.

\vspace{5pt}
\noindent
{\bf 2010 AMS  Subject Classifications}:  05A15, 06A07

\section{Introduction}
Throughout this paper, we shall assume that $P$ is a  poset  on  $\{1,2,\ldots,n\}$. We use $\leq_P$
to denote the order relation on $P$ to distinguish from the natural order $\leq $ on integers. We say that $P$ is naturally labeled if $i<j$ whenever $i<_P j$.
A $P$-partition is a map $f$ from $P$ to the set $\mathbb{N}$ of nonnegative integers such that
\begin{itemize}
\item[(1)] if $i<_P j$, then $f(i)\geq f(j)$;

\item[(2)] if $i<_P j$ and $i> j$, then $f(i)>f(j)$.
\end{itemize}
For more information on $P$-partitions, we refer the reader to the book \cite{EC} of  Stanley or  the recent
 survey paper \cite{Gessel} of Gessel.
 Let $\mathscr{A}(P)$ denote the set of  $P$-partitions.
The fundamental generating function $F_P(\textbf{x})$
 associated with $P$-partitions  is defined as
\[F_P(\textbf{x})=\sum_{f\in\mathscr{A}(P)}\textbf{x}^f=\sum_{f\in\mathscr{A}(P)}x_1^{f(1)}x_2^{f(2)}\cdots x_n^{f(n)}.\]
One of the most important problems in the theory of $P$-partitions is to determine explicit expressions for $F_P(\textbf{x})$. The main objective of this paper is to show that for any naturally labeled poset $P$, the  generating function  $F_P(\textbf{x})$  can factorize.

Let us first review some background. The first explicit expression for $F_P(\textbf{x})$ was given by Stanley \cite{Stanley-OrderedStructures}. Recall that a linear extension of $P$ is a permutation $w=w_1w_2\cdots w_n$ on $\{1,2,\ldots,n\}$ such that $i<j$ whenever $w_i<_P w_j$. Let $\mathcal{L}(P)$ be the set of   linear extensions of $P$. For a permutation $w$, write
\[\mathrm{Des}(w)=\left\{i\,|\, 1\leq i\leq n-1, w_i>w_{i+1}\right\}\]
for the  descent set of  $w$. Stanley \cite{Stanley-OrderedStructures} showed that
\begin{equation}\label{Stanley-formula for F_P}
  F_P(\textbf{x})=\sum_{w\in \mathcal{L}(P)}\frac{\prod_{i\in \mathrm{Des}(w)}x_{w_1}x_{w_2}\cdots x_{w_i}}{\prod_{j=1}^{n}\left(1-x_{w_1}x_{w_2}\cdots x_{w_j}\right)}.
\end{equation}

Boussicault, F\'eray,  Lascoux  and  Reiner
 \cite{BoussicaultFerayLascouxReiner} obtained  a similar formula for $F_P(\textbf{x})$ when $P$ is a forest, namely,  every element of $P$ is covered by at most one other element. We say that $j$ is the parent of $i$, if $i$ is covered by $j$ in $P$. Bj\"orner and Wachs \cite{BjonerWachs} defined
the descent set of a forest $P$ as
\begin{equation}\label{descent set of forest}
 \mathrm{Des}(P)=\left\{i\,|\,  \text{ if $j$ is the parent of $i$, then $i>j$}\right\}.
\end{equation}
Thus, if $i\in \mathrm{Des}(P)$, then there exists a node $j\in P$ such that $i<_P j$ but $i>j$. In particular, when a forest $P$ is  naturally labeled, the descent set $\mathrm{Des}(P)$ is empty. For a forest $P$, Boussicault, F\'eray,  Lascoux, and  Reiner's formula is stated as
\begin{equation}\label{formula for F_P when P is a forest}
  F_P(\textbf{x})=\frac{\prod_{i\in \mathrm{Des}(P)}\prod_{k\leq_P i}~x_k}{\prod_{j=1}^n\left(1-\prod_{\ell\leq_P j}~x_{\ell}
  \right)}.
\end{equation}

Furthermore, F\'eray and Reiner \cite{FerayReiner} obtained a nice formula for $F_P(\textbf{x})$ when $P$ is a naturally labeled
forest with duplications, whose definition is given below.
Recall that an order ideal of $P$ is a subset $J$ such that if $i\in J$ and $j\leq_P i$, then $j\in J$.
Throughout the rest of this paper, we will use $J$ to represent an order ideal of $P$.
An order ideal $J$ is connected  if the Hasse diagram of $J$ is a connected graph. A poset $P$ is called a forest  with duplications if for any connected order ideal $J_a$ of $P$, there exists at most one other connected order ideal $J_b$ such that $J_a$ and $J_b$ intersect nontrivially, namely,
\[J_a\cap J_b \neq\emptyset,\ \ \   \ J_a\not\subset J_b\ \ \  \text{and}\ \ \ J_b\not\subset J_a.\]
We would like to point out that a naturally labeled
forest must be a naturally labeled
forest with duplications, while the Hasse diagram of
a naturally labeled
forest with duplications needs not to be a forest.
Let $\mathcal{J}_{conn}(P)$ be the set of  connected order ideals of $P$. For a naturally labeled forest with duplications, F\'eray and Reiner \cite{FerayReiner} proved that
\begin{equation}\label{formula for forests with duplications}
 F_P(\textbf{x})=\frac{\prod_{\{J_a,J_b\}\in \Pi(P)}\left(1-\prod_{i\in J_a}x_i\prod_{j\in J_b}x_j\right)}{\prod_{J\in \mathcal{J}_{conn}(P)}\left(1-\prod_{k\in J}x_k\right)},
\end{equation}
where $\Pi(P)$ consists of all pairs $\{J_a,J_b\}$ of connected order ideals  that
intersect  nontrivially. Note that when $P$ is a naturally labeled forest (with no duplication), both $\mathrm{Des}(P)$ and $\Pi(P)$ are empty,
and each connected order ideal $J$ of $P$ must equal to  $\{\ell\mid  \ell\leq_P j\}$ for some $j\in \{1,2,\ldots,n\}$ and vice versa, and hence formula \eqref{formula for forests with duplications} coincides with formula \eqref{formula for F_P when P is a forest} in this special case.

For any poset $P$, F\'eray and Reiner \cite{FerayReiner}
introduced the notion of $P$-forests and obtained a decomposition of the set $\mathcal{L}(P)$ in terms  of linear extensions of $P$-forests.
Recall that a $P$-forest $F$ is a forest   on $\{1,2,\ldots,n\}$ such that  for any node  $i$, the  subtree  rooted at $i$ is a connected order ideal of $P$, and  that for any two incomparable nodes $i$ and $j$ in the poset $F$, the union of the subtrees rooted  at $i$ and  $j$ is a disconnected order ideal of $P$.
Let $\mathscr{F}(P)$ stand for the set of $P$-forests.
For example, for the poset $P$ in Figure \ref{a typcial example} there are three $P$-forests $F_1, F_2$ and $F_3$.
\begin{figure}[H]
\begin{minipage}{0.4\textwidth}
\begin{center}
   \begin{tikzpicture}[scale=1]
      \draw (3,-1)--(3,0)--(3,1);
      \draw (3,0)--(4,-1)--(4,0);
      \draw  (4,-1)--(4,-2);
      \fill(3,-1) circle(0.06cm); \coordinate[label=below:$3$] (1) at (2.8,-0.5);
      \fill(3,0) circle(0.06cm); \coordinate[label=below:$1$] (3) at (2.8,0.5);
      \fill(3,1) circle(0.06cm); \coordinate[label=below:$2$] (5) at (3,1.6);
      \fill(4,-1) circle(0.06cm); \coordinate[label=below:$4$] (2) at (4.2,-0.5);
      \fill(4,0) circle(0.06cm); \coordinate[label=below:$5$] (4) at (4,0.6);
      \fill(4,-2) circle(0.06cm); \coordinate[label=below:$6$] (4) at (4,-2);
\coordinate[label=below:$P$] (4) at (3,-2.2);
\end{tikzpicture}
\end{center}
\end{minipage}
\begin{minipage}{0.45\textwidth}
\begin{center}
\begin{tabular}{ c c c }

 \begin{tikzpicture}[scale=0.7]
\draw (0,0)--(1,1)--(2,0)--(2,-1) (1,1)--(1,2)--(1,3);
\fill (1,3) circle(0.08cm); \coordinate[label=above:$5$] (1) at (1,3);
\fill (1,2) circle(0.08cm); \coordinate[label=above:$2$] (1) at (1.3,1.8);
\fill (0,0) circle(0.08cm); \coordinate[label=above:$3$] (1) at (0,0);
\fill (1,1) circle(0.08cm); \coordinate[label=above:$1$] (1) at (1.3,0.8);
\fill (2,0) circle(0.08cm); \coordinate[label=above:$4$] (1) at (2.1,0);
\fill (2,-1) circle(0.08cm); \coordinate[label=below:$6$] (1) at (2,-1);
\coordinate[label=below:$F_1$] (4) at (1,-2);
\end{tikzpicture} &
\begin{tikzpicture}[scale=0.7]
\draw (0,0)--(1,1)--(2,0)--(2,-1) (1,1)--(1,2)--(1,3);
\fill (1,3) circle(0.08cm); \coordinate[label=above:$2$] (1) at (1,3);
\fill (1,2) circle(0.08cm); \coordinate[label=above:$5$] (1) at (1.3,1.8);
\fill (0,0) circle(0.08cm); \coordinate[label=above:$3$] (1) at (0,0);
\fill (1,1) circle(0.08cm); \coordinate[label=above:$1$] (1) at (1.3,0.8);
\fill (2,0) circle(0.08cm); \coordinate[label=above:$4$] (1) at (2.1,0);
\fill (2,-1) circle(0.08cm); \coordinate[label=below:$6$] (1) at (2,-1);
\coordinate[label=below:$F_2$] (4) at (1,-2);
\end{tikzpicture}  &  \begin{tikzpicture}[scale=0.7]
\draw (0,0)--(1,1)--(2,0)--(2,-1)--(2,-2) (1,1)--(1,2);
\fill (0,0) circle(0.08cm); \coordinate[label=below:$3$] (1) at (0,0);
\fill (1,1) circle(0.08cm); \coordinate[label=above:$1$] (1) at (1.3,0.8);
\fill (2,0) circle(0.08cm); \coordinate[label=below:$5$] (1) at (2.2,0.7);
\fill (2,-1) circle(0.08cm); \coordinate[label=below:$4$] (1) at (2.2,-0.3);
\fill (2,-2) circle(0.08cm); \coordinate[label=below:$6$] (1) at (2,-2);
\fill (1,2) circle(0.08cm); \coordinate[label=above:$2$] (1) at (1,2);
\coordinate[label=below:$F_3$] (4) at (1,-3);
\end{tikzpicture} \\

\end{tabular}
\end{center}
\end{minipage}
\caption{A poset $P$ and the  corresponding  $P$-forests.}
\label{a typcial example}
\end{figure}
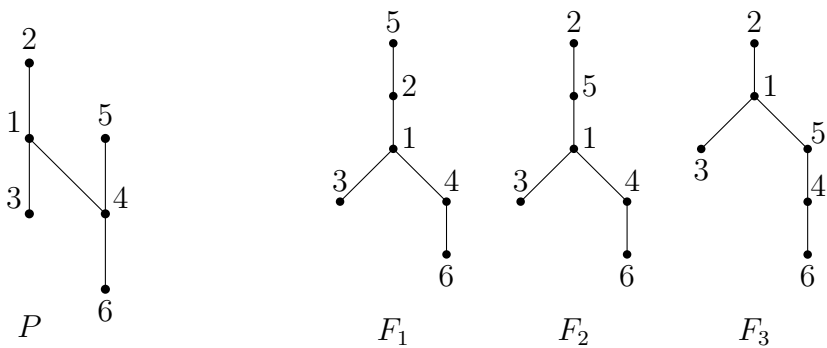
\noindent
F\'eray and Reiner \cite{FerayReiner} showed that
\begin{equation}\label{L decomposition P forests}
  \mathcal{L}(P)=\biguplus_{F\in\mathscr{F}(P)}\mathcal{L}(F),
\end{equation}
which was implied in \cite[Proposition 11.7]{FerayReiner}.
As was remarked by F\'eray and Reiner, the decomposition in \eqref{L decomposition P forests} also appeared in the work  of Postnikov \cite{Postnikov} and  Posnikov, Reiner and Williams \cite{PostnikovReinerWilliams}.
Combining \eqref{Stanley-formula for F_P}, \eqref{formula for F_P when P is a forest} and  \eqref{L decomposition P forests}, one readily sees that
\begin{equation}\label{F_P_FR}
  F_P(\textbf{x})=\sum_{F\in\mathscr{F}(P)}\frac{\prod_{i\in \mathrm{Des}(F)}\prod_{k\leq_{F}i}x_k}{\prod_{j=1}^n\left(1-\prod_{\ell\leq_{F}j} x_{\ell}\right)}.
\end{equation}

Note that both \eqref{Stanley-formula for F_P} and \eqref{F_P_FR} are summation formulas for $F_P(\textbf{x})$. However,
the expression of $F_P(\textbf{x})$ factored nicely for certain posets, as shown in
\eqref{formula for F_P when P is a forest} and \eqref{formula for forests with duplications}. Thus it is desirable to ask that for more general posets $P$ whether $F_P(\textbf{x})$ can factorise.
In this paper, we show that $F_P(\textbf{x})$ can factorise for any naturally labeled poset $P$.

Before stating our result, let  us first introduce some definitions and notations.
In the following we always assume that $P$ is a poset on $\{1,2,\ldots,n\}$.
For any graph $G$, we use $V(G)$ to denote the set of vertices of $G$.
We associate to $P$ a simple graph denoted by $G_P$ with the set $\mathcal{J}_{conn}(P)$ of connected order ideals of $P$  as $V(G_P)$, and two vertices are adjacent if they intersect nontrivially. For example, if $P$ is the  poset  given in Figure \ref{a typcial example}, then $G_P$ is as illustrated  in Figure \ref{posets}, where  we  use $\Lambda_i^P=\{k\,|\, k\leq_P i\}$ to denote the principal order ideal of $P$ generated by $i$, and adopt the  notation $\Lambda_{i,j}^P=\Lambda_i^P\cup\Lambda_j^P$.
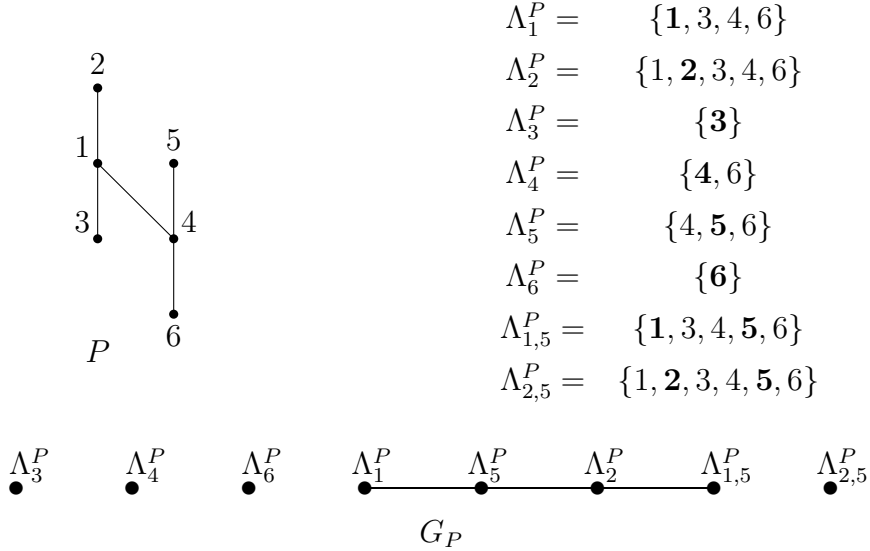
\begin{figure}[h]
\begin{minipage}{0.45\textwidth}
\begin{center}
   \begin{tikzpicture}[scale=1]
      \draw (3,-1)--(3,0)--(3,1);
      \draw (3,0)--(4,-1)--(4,0);
      \draw  (4,-1)--(4,-2);
      \fill(3,-1) circle(0.06cm); \coordinate[label=below:$3$] (1) at (2.8,-0.5);
      \fill(3,0) circle(0.06cm); \coordinate[label=below:$1$] (3) at (2.8,0.5);
      \fill(3,1) circle(0.06cm); \coordinate[label=below:$2$] (5) at (3,1.6);
      \fill(4,-1) circle(0.06cm); \coordinate[label=below:$4$] (2) at (4.2,-0.5);
      \fill(4,0) circle(0.06cm); \coordinate[label=below:$5$] (4) at (4,0.6);
      \fill(4,-2) circle(0.06cm); \coordinate[label=below:$6$] (4) at (4,-2);
\coordinate[label=below:$P$] (4) at (3,-2.2);
\end{tikzpicture}
\end{center}
\end{minipage}
\begin{minipage}{.45\textwidth}
\begin{center}
\begin{tabular}{cc}
$\Lambda_1^P=$&$\{\textbf{1},3,4,6\}$ \\ [5pt]
$\Lambda_2^P=$&$\{1,\textbf{2},3,4,6\}$ \\ [5pt]
$\Lambda_3^P=$&$\{\textbf{3}\}$ \\ [5pt]
$\Lambda_4^P=$&$\{\textbf{4},6\}$ \\ [5pt]
$\Lambda_5^P=$&$\{4,\textbf{5},6\}$  \\ [5pt]
$\Lambda_6^P=$&$\{\textbf{6}\}$  \\ [5pt]
$\Lambda_{1,5}^P=$&$\{\textbf{1},3,4,\textbf{5},6\}$  \\ [5pt]
$\Lambda_{2,5}^P=$&$ \{1,\textbf{2},3,4,\textbf{5},6\}$ \\ [5pt]
\end{tabular}
\end{center}
\end{minipage}
\\
\\
\setlength{\unitlength}{0.9cm}
\begin{picture}(-1,1)
\multiput(2,0)(1.7,0){4}{\circle*{0.2}}
\multiput(8.8,0)(1.7,0){4}{\circle*{0.2}}
\put(7.1,0){\line(1,0){5.1}}
\put(6.9,0.2){$\Lambda_1^P$}

\put(8.6,0.2){$\Lambda_5^P$}

\put(10.3,0.2){$\Lambda_2^P$}

\put(12,0.2){$\Lambda_{1,5}^P$}
\put(1.9,0.2){$\Lambda_3^P$}
\put(3.6,0.2){$\Lambda_4^P$}
\put(5.3,0.2){$\Lambda_6^P$}
\put(13.7,0.2){$\Lambda_{2,5}^P$}
\put(7.9,-0.8){$G_P$}

\end{picture}
\[\]
\caption{Connected order ideals of $P$ and the graph $G_P$.}
\label{posets}
\end{figure}

The first result of this paper is a bijection between the  set of $P$-forests and the set of maximum independent sets of $G_P$. Recall that an independent set of a graph is a subset of vertices such that no two vertices of the subset are adjacent. A maximum independent set of a graph  is an independent set that of largest possible size. For any graph $G$, we use $\mathscr{M}(G)$ to denote the set of  maximum independent sets of $G$. We have the following result.

\begin{thm}\label{The First Main Result}
There exists a bijection between the set $\mathscr{F}(P)$  of $P$-forests and the set  $\mathscr{M}(G_P)$  of maximum independent sets of $G_P$.
\end{thm}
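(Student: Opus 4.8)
The plan is to exhibit the bijection and its inverse explicitly, the crux being a uniform upper bound on the size of an independent set of $G_P$ that identifies the maximum independent sets as precisely the ones of size $n$. First I would define the forward map $\Phi \colon \mathscr{F}(P) \to \mathscr{M}(G_P)$. Given a $P$-forest $F$, write $\Lambda_i^F = \{k \mid k \leq_F i\}$ for the subtree of $F$ rooted at $i$ and set $\Phi(F) = \{\Lambda_i^F \mid i \in \{1,\dots,n\}\}$. By the defining properties of a $P$-forest each $\Lambda_i^F$ is a connected order ideal, so $\Phi(F) \subseteq V(G_P)$. These $n$ ideals are distinct, since $i$ is the unique $F$-maximal element of $\Lambda_i^F$, and any two are non-adjacent in $G_P$: if $i,j$ are $F$-comparable then one subtree contains the other, while if $i,j$ are $F$-incomparable their subtrees are disjoint (the ancestors of a common descendant would form a chain through both $i$ and $j$) and, by the second forest axiom, their union is a disconnected order ideal. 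Hence $\Phi(F)$ is an independent set of $G_P$ of size exactly $n$.

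Next I would prove the upper bound. An independent set of $G_P$ is exactly a laminar family of connected order ideals, i.e.\ a family $\mathcal{I} \subseteq \mathcal{J}_{conn}(P)$ any two of whose members are disjoint or nested. For such $\mathcal{I}$ and each $J \in \mathcal{I}$, consider the private part $\lambda(J) = J \setminus \bigcup\{J' \in \mathcal{I} \mid J' \subsetneq J\}$. Laminarity makes the sets $\lambda(J)$ pairwise disjoint, and the key point is that each is nonempty: if $\lambda(J)$ were empty then $J$ would be the disjoint union of its maximal proper sub-ideals in $\mathcal{I}$ (these are pairwise disjoint by laminarity and maximality), and since distinct parts of such a union carry no $\leq_P$-relations, the Hasse diagram of $J$ would be disconnected unless there were a single such sub-ideal equal to $J$ --- both alternatives being impossible. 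This is precisely where connectedness of the ideals enters. Consequently $|\mathcal{I}| \leq n$, so combined with the previous step the maximum independent sets of $G_P$ are exactly the independent sets of size $n$, and $\Phi$ indeed takes values in $\mathscr{M}(G_P)$.

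Then I would construct the inverse $\Psi$. For $\mathcal{I} \in \mathscr{M}(G_P)$ we have $|\mathcal{I}| = n$, so the $n$ disjoint nonempty sets $\lambda(J)$ are forced to be singletons partitioning $\{1,\dots,n\}$; letting $\sigma(J)$ denote the element of $\lambda(J)$ gives a bijection $\sigma \colon \mathcal{I} \to \{1,\dots,n\}$. Define a forest $\Psi(\mathcal{I})$ on $\{1,\dots,n\}$ by declaring the parent of $\sigma(J)$ to be $\sigma(J'')$, where $J''$ is the unique minimal member of $\mathcal{I}$ properly containing $J$ (unique by laminarity). The central verification is that the subtree of $\Psi(\mathcal{I})$ rooted at $\sigma(J)$ equals $J$, which follows once one checks that the singletons $\lambda(J')$ over all $J' \in \mathcal{I}$ with $J' \subseteq J$ partition $J$. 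Granting this, both $P$-forest axioms hold: the subtrees are exactly the connected order ideals of $\mathcal{I}$, and incomparable nodes correspond to non-nested, hence disjoint, ideals whose union is a disconnected order ideal. Thus $\Psi(\mathcal{I}) \in \mathscr{F}(P)$.

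Finally, checking $\Psi \circ \Phi = \mathrm{id}$ and $\Phi \circ \Psi = \mathrm{id}$ is routine once the identity ``the subtree rooted at $\sigma(J)$ equals $J$'' is in hand: for a $P$-forest $F$ one has $\lambda(\Lambda_i^F) = \{i\}$, and the minimal ideal properly containing $\Lambda_i^F$ is the subtree rooted at the $F$-parent of $i$. I expect the main obstacle to be the nonemptiness of $\lambda(J)$ --- the connectedness argument underpinning the upper bound --- together with the verification that the subtrees of $\Psi(\mathcal{I})$ recover exactly the ideals of $\mathcal{I}$; the remainder is bookkeeping with laminar families.
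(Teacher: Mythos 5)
Your proposal is correct and follows essentially the same route as the paper: the same map $F\mapsto\{\Lambda_1^F,\dots,\Lambda_n^F\}$, the same inverse built from the singleton private parts $J\setminus\bigcup_{J'\subsetneq J}J'$ (the paper's $J\setminus\mu(M,J)$), and the same connectedness argument showing a connected ideal cannot be a disjoint union of smaller ideals of the family. The only difference is bookkeeping: you bound $|\mathcal{I}|\le n$ by showing the private parts are pairwise disjoint and nonempty, whereas the paper orders the ideals by a linear extension of the containment order and shows the partial unions strictly increase --- the two arguments are interchangeable.
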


The proof of this result will be given in Section \ref{sect-bijection}, where we establish a bijection $\Phi$ from $\mathscr{F}(P)$ to $\mathscr{M}(G_P)$.
Let $\Psi$ be the inverse map of $\Phi$. In view of the fact that $\Psi(M)$ is a forest, for a maximum independent set $M$ of $G_P$, we can define the descent set $\mathrm{Des}(M)$ of $M$ as the descent set $\mathrm{Des}(\Psi(M))$, namely,
\begin{equation}\label{descent set of M}
  \mathrm{Des}(M)=\mathrm{Des}(\Psi(M)),
\end{equation}
where $\mathrm{Des}(\Psi(M))$ is given by \eqref{descent set of forest}.
Suppose the graph $G_P$ has $h$ connected components, say $C_1,C_2,\ldots,C_h$.
As usual, we use $V(C_r)$ to denote the vertex set of $C_r$ for $1\leq r\leq h$, respectively. It is clear that each maximum independent set of $G_P$ is a disjoint union of maximum independent sets of $G_P$'s connected components.
Let $\mathscr{M}(C_r)$ denote the set of maximum independent sets of $C_r$ for each $1\leq r\leq h$, respectively. 
Given  a $M_r\in \mathscr{M}(C_r)$, we shall further define a descent set for  $M_r$ as illustrated below.
Let  $M$ be a maximum independent set of $G_P$ such that $M\cap V(C_r)=M_r$.
For any $J\in M$, let
\begin{equation}\label{def of M_J}
 \mu(M,J)= \bigcup_{J'\in M,~J'\subset J}J'.
\end{equation}
Define $\mathrm{Des}(M_r,M)$  and $\overline{\mathrm{Des}}(M_r,M)$  as
\begin{eqnarray*}\label{definition of des M_r M}
   \mathrm{Des}(M_r,M)&=&\big\{i\in \mathrm{Des}(M) \mid \{i\}= J\setminus \mu(M,J) \text{ for some } J\in M_r\big\}, \\
   \nonumber \overline{\mathrm{Des}}(M_r,M)&=&\big\{J\in M_r \mid J\setminus \mu(M,J)=\{i\} \text{ for some } i\in \mathrm{Des}(M_r,M)\big\}.
\end{eqnarray*}
It is remarkable that $\mathrm{Des}(M_r,M)$ and $\overline{\mathrm{Des}}(M_r,M)$
are irrelevant to the choice of $M$ when the poset $P$ is naturally labeled. Precisely, we have the following result.

\begin{thm}\label{def-descent}
Suppose that $P$ is a naturally labeled poset and $G_P$ has connected components $C_1,C_2,\ldots,C_h$. Let $M_r$ be a maximum independent set of $C_r$ for some $1\leq r\leq h$. Then for
any two maximum independent sets $M^1,M^2$ of $G_P$ satisfying $M^1\cap V(C_r)=M^2\cap V(C_r)=M_r$,  we have
\begin{eqnarray}
  \mathrm{Des}(M_r,M^1)&=&\mathrm{Des}(M_r,M^2),\\
 \nonumber \overline{\mathrm{Des}}(M_r,M^1)&=&\overline{\mathrm{Des}}(M_r,M^2).
\end{eqnarray}
\end{thm}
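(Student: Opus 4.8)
The plan is to transport the whole statement through the bijection of Theorem~\ref{The First Main Result} and reduce it to a single rigidity fact about roots of connected order ideals under natural labeling. First I would record the shape of $\Psi(M)$: by the construction of $\Phi$, a maximum independent set $M$ is exactly the collection of subtrees rooted at the nodes of the forest $F=\Psi(M)$, so $(M,\subseteq)$ is isomorphic to $F$ as a poset and the members of $M$ containing a fixed element $k$ form a chain (the ancestors of $k$ in $F$). Hence for every $J\in M$ the set $J\setminus\mu(M,J)$ is a single node, which I denote $\rho_M(J)$; equivalently $J$ is the $\subseteq$-smallest member of $M$ containing $\rho_M(J)$, and $J\mapsto\rho_M(J)$ is a bijection $M\to\{1,\dots,n\}$ sending $J$ to its $F$-root. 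Under this dictionary $\rho_M(J)\in\mathrm{Des}(M)$ if and only if $J$ is non-maximal in $(M,\subseteq)$ with cover $J^{+}$ and $\rho_M(J)>\rho_M(J^{+})$, and
\[
\mathrm{Des}(M_r,M)=\{\rho_M(J): J\in M_r,\ \rho_M(J)\in\mathrm{Des}(M)\},\qquad \overline{\mathrm{Des}}(M_r,M)=\rho_M^{-1}\big(\mathrm{Des}(M_r,M)\big).
\]
It therefore suffices to prove that every $J\in M_r$ contributes the same \emph{descent datum} (either the element $\rho_M(J)$, when it is a descent, or nothing) in any two admissible $M$; this yields both equalities at once.

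Since each maximum independent set of $G_P$ is the disjoint union of maximum independent sets of the components $C_1,\dots,C_h$, and these choices are independent, any two $M^{1},M^{2}$ with $M^{1}\cap V(C_r)=M^{2}\cap V(C_r)=M_r$ are joined by a chain of maximum independent sets, each obtained from the previous by replacing the part lying in a single component $C_s$ ($s\neq r$) by another element of $\mathscr{M}(C_s)$. Thus I would reduce to showing that one such replacement $M\rightsquigarrow M'$ leaves the descent datum of every $J\in M_r$ unchanged. Note that the two comparable members $J'\subsetneq J$ with $J'$ in a different component are exactly the interacting pieces, since any two ideals from distinct components of $G_P$ are either disjoint or nested; so the only way the swap can affect $J\in M_r$ is through proper sub-ideals of $J$ coming from $C_s$.

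The core of the argument is then a local analysis of $\rho_M(J)$ and of the cover $J^{+}$, using natural labeling in two places. First, $\rho_M(J)$ is always a maximal element of $J$ in $P$, and the remaining maximal elements of $J$ are precisely the roots of the maximal proper members of $M$ inside $J$; so reassigning the resolution of $C_s$ can only move $\rho_M(J)$ among the maximal elements of $J$. Second, if $J$ has cover $J^{+}$ with root $m'=\rho_M(J^{+})$, then $m'\in J^{+}\setminus J$ is attached to $J$ from above, so some $a\in J$ has $a<_P m'$; because $J$ is an order ideal this rules out $\rho_M(J)>_P m'$, and hence a descent at $J$ can occur only when $\rho_M(J)$ and $m'$ are $P$-incomparable with $\rho_M(J)>m'$. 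The claim I must establish is that such an inversion is rigid: any maximal element of $J$ whose root-status can be toggled by a component other than $C_r$ lies $P$-below the element of $P$ to which it attaches in its cover, and is therefore never the larger side of a descent; symmetrically, when $\rho_M(J)$ does produce a descent I must show that both $\rho_M(J)$ and the value $\rho_M(J^{+})$ entering the comparison are pinned by $M_r$ together with the covering relations of $P$ near $J$ (concretely, that $\rho_M(J^{+})$ is the minimal element of $P$ above $J$ whose removal detaches $J$, irrespective of how the deeper components are resolved). Granting this, each $J\in M_r$ carries the same descent datum in $M$ and $M'$, which closes the reduction.

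The example of the top ideal, whose root genuinely migrates among its several $P$-maximal elements as a lower component is re-resolved, shows that $\rho_M(J)$ itself is \emph{not} an invariant of $M_r$; only its descent-part is. The main obstacle is exactly the rigidity claim of the previous paragraph: one must prove that the freedom to reassign $\rho_M(J)$ is confined to the non-descent regime, and that the comparison defining a genuine descent is computed entirely from $M_r$ and the local order of $P$ around $J$, never from choices made inside the other components. Pinning down the correct $P$-local description of $\rho_M(J^{+})$ and showing it is invariant under the swap is where natural labeling is used decisively, and it is the step I expect to demand the most care.
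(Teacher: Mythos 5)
Your setup is faithful to the paper's: you identify the roots $\rho_M(J)=J\setminus\mu(M,J)$, translate descents into cover relations in $(M,\subseteq)$, and correctly isolate the two phenomena that must be controlled (roots that migrate must never be descents; roots that are descents must be pinned, together with the root of the cover, by $M_r$ alone). The reduction to single-component swaps is harmless. But the proposal stops exactly where the proof has to begin: the ``rigidity claim'' is announced as the thing to be established, twice, and is never established. That claim is the entire content of the theorem, and the paper spends all of Section 3 building the tools for it. Concretely, two arguments are missing. First, for a singleton component $C_r=\{J\}$ --- which by the paper's Lemma~\ref{isolated vertex} includes every ``top ideal'' $J_s^{max}=\bigcup_{J'\in V(C_s)}J'$ of a nontrivial component, precisely the ideals whose roots genuinely migrate --- one must show the root is never the larger side of a descent. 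The paper does this by showing that if $\rho_M(J)=i$, the cover $J^+$ has root $j<i$, and $i,j$ are $P$-incomparable, then $i$ and $j$ are both maximal in $J^+$, so $\Lambda_i^P$ and $\Lambda_j^P$ lie in one connected component of the induced subgraph $H_P$ on principal order ideals; Lemmas~\ref{induced graph} and~\ref{isolated vertex} then force $J^+\subseteq J$, a contradiction. None of this machinery ($\chi_J$, $H_P$, the absorption lemma, the isolated-vertex lemma) appears in your plan, and your proposed substitute characterization of $\rho_M(J^+)$ as ``the minimal element of $P$ above $J$ whose removal detaches $J$'' is not justified and does not obviously hold. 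Second, for $J\in M_r$ with $|V(C_r)|\ge 2$, the root does \emph{not} merely ``move among the maximal elements of $J$'': it is literally invariant, because if the $M^1$-root $j$ of $J$ lay in some $J_2\in U(M^2,J)$ from another component, then $j\in gs(J)\cap gs(J_2)$ would place $\chi_J$ and $\chi_{J_2}$ in the same component of $H_P$ and force $J_2\in V(C_r)$. One must also show that the cover $J^+$ of $J$ is the same in $M^1$ and $M^2$ (it is either another member of $M_r$ or the isolated vertex $J_r^{max}$, which belongs to every maximum independent set). You acknowledge both points but supply neither.

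A secondary inaccuracy: ``the remaining maximal elements of $J$ are precisely the roots of the maximal proper members of $M$ inside $J$'' is false. The root of $J'\in U_{max}(M,J)$ is a maximal element of $J'$ but need not be maximal in $J$; what is true is that each maximal element of $J$ other than $\rho_M(J)$ is the root of the $\subseteq$-minimal member of $M$ containing it, which may sit arbitrarily deep below $J$. This does not derail your outline, since only the (correct) weaker statement that $\rho_M(J)\in gs(J)$ is used, but it signals that the local analysis around $gs(J)$ --- which is where the actual proof lives --- has not yet been carried out.
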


Therefore, for a naturally labeled poset $P$ and a given $M_r\in \mathscr{M}(C_r)$ , we can introduce the notation of
$\mathrm{Des}(M_r)$ and $\overline{\mathrm{Des}}(M_r)$, which are respectively defined by
\begin{eqnarray}\label{definition of des overline M_r}
 \mathrm{Des}(M_r) &=& \mathrm{Des}(M_r,M), \\
 \nonumber \overline{\mathrm{Des}}(M_r) &=& \overline{\mathrm{Des}}(M_r,M),
\end{eqnarray}
where $M$ is some maximum independent set of $G_P$ such that $M\cap V(C_r)=M_r$.

The main result of this paper is as follows.

\begin{thm}\label{the seconde main result}
 If $P$ is a naturally labeled poset, and the graph $G_P$ has $h$ connected  components $C_1$, $C_2$,\ldots,$C_h$.  Then we have
\begin{equation}\label{the product formula for F_P}
  F_P(\emph{\textbf{x}})=\prod_{r=1}^h\sum_{ M_r\in \mathscr{M}(C_r)}\frac{\prod_{J\in\overline{\mathrm{Des}}( M_r)  }\prod_{k\in J}x_k}{\prod_{J\in M_r}(1-\prod_{j\in J}x_j)}.
\end{equation}
\end{thm}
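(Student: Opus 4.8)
The plan is to start from the summation formula \eqref{F_P_FR} and transport it across the bijection $\Phi$ of Theorem \ref{The First Main Result}. Fix a $P$-forest $F$ and let $M=\Phi(F)$. For each node $j$ of $F$ the subtree rooted at $j$ is precisely the connected order ideal $\{\ell\mid \ell\leq_F j\}$, and these $n$ subtrees are exactly the elements of $M$; moreover the node $j$ is recovered from its subtree $J$ as the singleton $J\setminus\mu(M,J)=\{j\}$, since $\mu(M,J)$ is by definition the union of the proper sub-ideals of $J$ lying in $M$, that is, the union of the subtrees rooted at the children of $j$ in $F$. Consequently $\prod_{\ell\leq_F j}x_\ell=\prod_{k\in J}x_k$, so the denominator $\prod_{j=1}^n\bigl(1-\prod_{\ell\leq_F j}x_\ell\bigr)$ becomes $\prod_{J\in M}\bigl(1-\prod_{j\in J}x_j\bigr)$. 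In the numerator, a descent $i\in\mathrm{Des}(F)=\mathrm{Des}(M)$ contributes $\prod_{k\leq_F i}x_k=\prod_{k\in J_i}x_k$, where $J_i\in M$ is the unique ideal with $J_i\setminus\mu(M,J_i)=\{i\}$. Thus \eqref{F_P_FR} is rewritten as a sum over $\mathscr{M}(G_P)$ whose summand indexed by $M$ has denominator $\prod_{J\in M}\bigl(1-\prod_{j\in J}x_j\bigr)$ and numerator $\prod_{k\in J}x_k$ taken over the outer index set $\{J_i\mid i\in\mathrm{Des}(M)\}$.

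Next I would factor each summand over the connected components of $G_P$. Writing $M=M_1\uplus\cdots\uplus M_h$ with $M_r=M\cap V(C_r)\in\mathscr{M}(C_r)$, the denominator splits immediately as $\prod_{r=1}^h\prod_{J\in M_r}\bigl(1-\prod_{j\in J}x_j\bigr)$. For the numerator, each ideal $J_i$ with $i\in\mathrm{Des}(M)$ lies in exactly one block $M_r$, so the index set partitions, directly from the definitions of $\mathrm{Des}(M_r,M)$ and $\overline{\mathrm{Des}}(M_r,M)$, as $\{J_i\mid i\in\mathrm{Des}(M)\}=\biguplus_{r=1}^h\overline{\mathrm{Des}}(M_r,M)$. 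Hence the numerator factors as $\prod_{r=1}^h\prod_{J\in\overline{\mathrm{Des}}(M_r,M)}\prod_{k\in J}x_k$, and the whole summand attached to $M$ becomes the product over $r$ of the factors $\prod_{J\in\overline{\mathrm{Des}}(M_r,M)}\prod_{k\in J}x_k\big/\prod_{J\in M_r}\bigl(1-\prod_{j\in J}x_j\bigr)$.

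At this point the hypothesis that $P$ is naturally labeled enters through Theorem \ref{def-descent}: it guarantees that $\overline{\mathrm{Des}}(M_r,M)$ depends only on $M_r$ and equals $\overline{\mathrm{Des}}(M_r)$, so the $r$-th factor is a function of $M_r$ alone. Finally, since every $M\in\mathscr{M}(G_P)$ corresponds bijectively to a tuple $(M_1,\ldots,M_h)\in\mathscr{M}(C_1)\times\cdots\times\mathscr{M}(C_h)$, the resulting sum of products over $\mathscr{M}(G_P)$ distributes into a product of sums, which is exactly \eqref{the product formula for F_P}. I expect the main obstacle to lie not in this final algebraic manipulation, which is routine distributivity, but in the translation step of the first paragraph: one must verify carefully, using the explicit construction of $\Phi$ from Section \ref{sect-bijection}, that the subtrees of $F$ are precisely the members of $M$ and that the root of each subtree is recovered as $J\setminus\mu(M,J)$, so that $\mathrm{Des}(F)$ and the exponents in \eqref{F_P_FR} translate faithfully into the $\mu$-based descent data attached to $M$.
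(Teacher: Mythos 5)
Your proposal is correct and follows essentially the same route as the paper: both rewrite \eqref{F_P_FR} as a sum over $\mathscr{M}(G_P)$ via the bijection $\Phi$ (with Lemma \ref{the principal ideal of F} supplying the identification of the subtree $\{\ell\mid\ell\leq_F j\}$ with the ideal $J$ satisfying $J\setminus\mu(M,J)=\{j\}$), decompose $\overline{\mathrm{Des}}(M)$ as the disjoint union of the $\overline{\mathrm{Des}}(M_r)$ over the components, invoke Theorem \ref{def-descent} so that each factor depends on $M_r$ alone, and conclude by distributing the sum of products into a product of sums. Your first paragraph merely spells out in more detail the translation step that the paper compresses into a single citation of Theorem \ref{The First Main Result} and Lemma \ref{the principal ideal of F}.
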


This paper is organized as follows.
In Section \ref{sect-bijection}, we shall give a proof of Theorem \ref{The First Main Result}.
In Section \ref{sect-main}, we shall prove Theorems \ref{def-descent} and \ref{the seconde main result}. Based on Theorem \ref{the seconde main result}, we  provide an alternative proof for F\'eray and Reiner's formula \eqref{formula for forests with duplications}. In Section \ref{sect-application}, Theorem \ref{the seconde main result} will be used to derive the generating function of major index of linear extensions of $P$, as well as to count the number of linear extensions of $P$.

\section{The bijection $\Phi$ between $\mathscr{F}(P)$ and $\mathscr{M}(G_P)$} \label{sect-bijection}

The aim of this section is to give a proof of Theorem \ref{The First Main Result}.
To this end, we shall establish a bijection $\Phi$ from $\mathscr{F}(P)$ to $\mathscr{M}(G_P)$ as mentioned before.

To give a description of the map $\Phi$, we first note some properties of $\mathscr{F}(P)$ and $\mathscr{M}(G_P)$. Given $M\in \mathscr{M}(G_P)$ and  $J\in M$, let
\begin{eqnarray}\label{def-UMJ}
  U(M,J) &=& \{J'\in M\mid J'\subset J\}, \\
  \nonumber U_{max}(M,J) &=& \big\{J_a\in U(M,J)\mid  J_a\not\subset J_b \text{ for any } J_b\in U(M,J)\big\}.
\end{eqnarray}
Recall that the set $\mu(M,J)$ is defined in \eqref{def of M_J}, which is also an order ideal of $P$. Thus
\begin{align}\label{eqn-umu}
\mu(M,J)=\bigcup_{J'\in U(M,J)}J'= \bigcup_{J'\in U_{max}(M,J)}J'.
\end{align}
The following assertion will be used in the future proofs.
\begin{lem}\label{lemm-empty}
For any $M\in \mathscr{M}(G_P)$ and  $J\in M$, the intersection of any two elements of $U_{max}(M,J)$ is empty.
\end{lem}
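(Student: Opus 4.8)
The plan is to reduce the claim to the defining adjacency relation of $G_P$ applied to a pair of distinct elements of $U_{max}(M,J)$. Fix $M\in\mathscr{M}(G_P)$ and $J\in M$, and let $J_a,J_b$ be two distinct elements of $U_{max}(M,J)$. The goal is to show $J_a\cap J_b=\emptyset$, and the only facts I would use are that $J_a$ and $J_b$ both belong to the independent set $M$ and that each is maximal inside $U(M,J)$ with respect to inclusion.

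First I would record what non-adjacency of $J_a$ and $J_b$ in $G_P$ means. Since two vertices of $G_P$ are joined by an edge precisely when the corresponding connected order ideals have nonempty intersection and are incomparable under set inclusion, negating this condition for the distinct ideals $J_a$ and $J_b$ yields exactly three alternatives: either $J_a\cap J_b=\emptyset$, or $J_a\subset J_b$, or $J_b\subset J_a$. Because $M$ is an independent set, $J_a$ and $J_b$ are non-adjacent, so one of these three alternatives must hold.

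Next I would eliminate the two containment alternatives using the definition of $U_{max}(M,J)$. As $J_a,J_b\in U_{max}(M,J)\subseteq U(M,J)$, both ideals lie in $U(M,J)$; the maximality of $J_a$ in $U(M,J)$ forbids $J_a\subset J_b$, and symmetrically the maximality of $J_b$ forbids $J_b\subset J_a$. Thus both containment cases are ruled out, and the remaining alternative $J_a\cap J_b=\emptyset$ must hold, which is the assertion.

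The argument is essentially immediate once the edge relation is unwound, so I do not anticipate a serious obstacle; the only point requiring care is the correct reading of the adjacency condition of $G_P$, namely that an edge simultaneously encodes nonempty intersection and incomparability, so that non-adjacency of two intersecting ideals forces one to contain the other. I would also remark that the proof invokes only the independence of $M$, not its maximality, and hence the statement in fact holds for every independent set of $G_P$.
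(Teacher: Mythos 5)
Your proof is correct and follows essentially the same route as the paper: independence of $M$ gives non-adjacency of the two ideals, non-adjacency means empty intersection or containment, and maximality within $U(M,J)$ rules out both containments. Your closing remark that only independence (not maximality) of $M$ is used is also accurate.
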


\begin{proof}
Let $J_1,J_2\in U_{max}(M,J)$. Because $U_{max}(M,J)\subset M$ and $M$ is an independent set of $G_P$, it follows that $J_1$ and $J_2$ are not adjacent in $G_P$.  Recall that for any two vertices $J_1,J_2\in \mathcal{J}_{conn}(P)$ of $G_P$, $J_1$ and $J_2$ are not adjacent in $G_P$ if and only if
\[J_1\cap J_2 =\emptyset,\ \ \   \text{or } J_1\subset J_2,\ \ \  \text{or } J_2\subset J_1.\]
On the other hand, by the definition of $U_{max}(M,J)$, there is neither $J_1\subset J_2$ nor $J_2\subset J_1$. Hence $J_a\cap J_b=\emptyset$.
\end{proof}

Given a $P$-forest $F\in\mathscr{F}(P)$, let $\Lambda_i^F=\{j\,|\, j\leq_F i\}$ denote the principal order ideal of $F$ generated by $i$. By definition of $P$-forest, each $\Lambda_i^F$ is a connected order ideal of $P$, although $\Lambda_i^F$ is not necessarily a principal order ideal of $P$. Then by the definition of $G_P$, each $\Lambda_i^F$ is a vertex of $G_P$.
Moreover, we have the following result.

\begin{lem}\label{maxi}
For any $P$-forest $F\in\mathscr{F}(P)$,  the principal order ideals $\Lambda_1^F,\Lambda_2^F,\ldots,\Lambda_n^F$ form a maximum independent set of $G_P$.
\end{lem}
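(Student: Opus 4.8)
The plan is to show two things: that the collection $\{\Lambda_1^F,\ldots,\Lambda_n^F\}$ is an independent set of $G_P$, and that it is of maximum size. I will first argue independence. Take two distinct principal order ideals $\Lambda_i^F$ and $\Lambda_j^F$. If $i$ and $j$ are comparable in $F$, say $i<_F j$, then $\Lambda_i^F\subset \Lambda_j^F$ by definition of principal order ideals, so the two vertices are not adjacent in $G_P$ (recall adjacency requires nontrivial intersection together with incomparability under inclusion). If instead $i$ and $j$ are incomparable in $F$, then by the defining property of a $P$-forest the union of the subtrees rooted at $i$ and $j$ — which is precisely $\Lambda_i^F\cup\Lambda_j^F$ — is a \emph{disconnected} order ideal of $P$; this forces $\Lambda_i^F\cap\Lambda_j^F=\emptyset$, since otherwise the union of two connected order ideals sharing a common element would be connected. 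Hence in either case the two vertices are non-adjacent, establishing that $\{\Lambda_1^F,\ldots,\Lambda_n^F\}$ is independent.

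Next I must check that the $n$ ideals $\Lambda_1^F,\ldots,\Lambda_n^F$ are pairwise \emph{distinct}, so that the independent set genuinely has cardinality $n$. This follows because the roots of the subtrees determine the ideals: the maximal element of $\Lambda_i^F$ under $\leq_F$ is $i$ itself, so $\Lambda_i^F=\Lambda_j^F$ would force $i=j$. (More carefully, since $F$ is a forest on $\{1,\ldots,n\}$, each element $i$ is the unique top of its own principal ideal $\Lambda_i^F$, and distinct nodes give distinct ideals.) Thus we obtain an independent set of size exactly $n$.

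It then remains to prove maximality, i.e. that \emph{no} independent set of $G_P$ can have more than $n$ vertices, so that any independent set of size $n$ is automatically maximum. The cleanest route is to exhibit an upper bound of $n$ on the size of any independent set. The idea I would pursue is to assign to each connected order ideal $J$ in an independent set $M$ a distinguished element of $\{1,\ldots,n\}$ in an injective way. A natural candidate is the element witnessing the ``newness'' of $J$: using the structure already set up via $\mu(M,J)=\bigcup_{J'\in M,\,J'\subset J}J'$, one expects each $J\in M$ to contribute at least one element of $J\setminus\mu(M,J)$ not covered by the ideals strictly below it in $M$, and these witness-elements should be distinct across different $J\in M$. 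Establishing this injection $J\mapsto$ (a new element) would give $|M|\le n$ and complete the argument.

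The main obstacle I anticipate is exactly this maximality/upper-bound step. Independence is a direct unwinding of the definitions, but bounding the size of an arbitrary independent set requires understanding how the ideals in $M$ overlap and nest. In particular one must show $J\setminus\mu(M,J)\neq\emptyset$ for each $J\in M$ and that choosing a representative element from each such difference yields distinct representatives; here Lemma~\ref{lemm-empty} (that the maximal ideals of $U(M,J)$ are pairwise disjoint) is likely the key tool for controlling $\mu(M,J)$ and ensuring the representatives do not collide. I would expect the authors to either carry out this counting directly, or instead to deduce maximality \emph{after} constructing the full bijection $\Phi$ — since a bijection with $\mathscr{F}(P)$, combined with the fact that every $P$-forest yields an independent set of the fixed size $n$, would show all these independent sets are equicardinal and hence maximum. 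Given the phrasing of the lemma, I suspect the intended proof defers the maximality claim to the global bijection $\Phi$ rather than proving the bound in isolation.
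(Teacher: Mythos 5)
Your independence argument is correct and is essentially the paper's: comparability in $F$ gives nesting of the principal ideals, and incomparability gives disjointness via the $P$-forest axiom that the union of the two subtrees is a disconnected order ideal. The problem is the second half. You never actually prove that an arbitrary independent set $M$ of $G_P$ has at most $n$ elements; you only sketch a strategy (``assign to each $J\in M$ a new witness element injectively'') and then speculate that the paper might instead defer maximality to the bijection $\Phi$. That speculation would be circular here: the construction of the inverse map $\Psi$ rests on Lemma~\ref{the independent set K}, which in turn needs $|M|\leq n$, so the bound must be proved at this stage, and the paper does prove it inside this very lemma.

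The missing content is precisely the step you flag as the ``main obstacle.'' The paper's argument runs as follows: list $M=\{J_1,\ldots,J_k\}$ along a linear extension of the inclusion order (so $r<s$ whenever $J_r\subset J_s$), set $I_s=\bigcup_{r\leq s}J_r$, and show the chain $I_1\subseteq I_2\subseteq\cdots\subseteq I_k$ is \emph{strictly} increasing, whence $k\leq n$. Strictness is where connectivity enters: if $I_s=I_{s-1}$, then since each earlier $J_r$ is either contained in $J_s$ or disjoint from it (by independence), one gets $J_s=\mu(M,J_s)=\bigcup_{J'\in U_{max}(M,J_s)}J'$; by Lemma~\ref{lemm-empty} the sets in $U_{max}(M,J_s)$ are pairwise disjoint, so either $J_s$ equals a single $J_r$ properly contained in it (absurd) or $J_s$ is a disjoint union of two or more nonempty order ideals, contradicting the connectedness of $J_s$. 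Your sketch correctly names the ingredients ($J\setminus\mu(M,J)\neq\emptyset$, distinct witnesses, Lemma~\ref{lemm-empty}), but it omits the connectivity contradiction that makes $J\setminus\mu(M,J)$ nonempty and the ordering device that makes the witnesses distinct, so as written the maximality claim is not established.
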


\begin{proof}
We first show that $\{\Lambda_1^F,\Lambda_2^F,\ldots,\Lambda_n^F\}$ is an independent set of $G_P$, that is, for any two nodes $i$, $j$ of $F$, the principal order ideals $\Lambda_i^{F}$ and $\Lambda_j^{F}$ are not adjacent in $G_P$.
There are  two cases to consider.

\begin{itemize}
\item[(1)]  The vertices $i$ and $j$ are incomparable in $F$. Since $F$ is a forest, it is clear that  $\Lambda_i^F\cap\Lambda_j^F=\emptyset$. This implies that
 $\Lambda_i^F$ and $\Lambda_j^F$ are not adjacent in $G_P$.

\item[(2)] The vertices  $i$ and $j$ are comparable in $F$. If  $i<_F j$, then  $\Lambda_i^F\subset\Lambda_j^F$; If $j<_F i$, then  $\Lambda_j^F\subset\Lambda_i^F$. In both circumstances, $\Lambda_i^F$ and $\Lambda_j^F$ are not adjacent in $G_P$.
\end{itemize}

We proceed  to show that the independent set $\{\Lambda_1^F,\Lambda_2^F,\ldots,\Lambda_n^F\}$ is of the largest possible size. To this end,  it is enough to verify that $| M|\leq n$ for any independent set $M$ of $G_P$.
Assume that $ M=\{J_1,J_2,\ldots,J_k\}$ is an independent set of $G_P$, which means that $J_i$ is a connected order ideal of $P$, and $J_i,J_j$ are not adjacent in $G_P$ for any $1\leq i< j\leq k$. We further  assume that the subscript satisfies $r<s$ whenever $J_r\subset J_s$.
In fact, this can be achieved as follows. Consider  $ M$ as a poset  ordered by set inclusion.
Then choose a subscript such that  $J_1J_2 \cdots J_k$ is  a linear extension of  $M$. Such a subscript satisfies the condition that $r<s$ whenever $J_r\subset J_s$.

For $1\leq s\leq k$, let
\[I_s=\bigcup_{1\leq r\leq s}J_r.\]
It is clear that $I_{s-1}\subseteq I_s$ for any $1<s\leq k$.
We  claim that
\begin{equation}\label{inclusion}
  \emptyset\neq I_1\subset I_2\subset\cdots\subset I_k\subseteq \{1,2,\ldots,n\},
\end{equation}
which implies  that $| M|=k\leq n$.

Suppose to the contrary  that $I_s= I_{s-1}$  for some $1< s\leq k$. Thus,
\begin{equation}\label{inclusion1}
  J_s\subseteq I_s= I_{s-1}=\bigcup_{1\leq r\leq s-1}J_{r}.
\end{equation}
The set $U(M,J_s)$ is defined as
\[U(M,J_s)=\{J'\mid J'\in M,J\subset J_s\}=\{J_r\mid 1\leq r\leq s-1, ~J_r\subset J_s\}.\]
Clearly,
\begin{equation}\label{bz1}
\mu(M,J_s)=\bigcup_{J'\in U(M,J_s)}J'\subseteq J_s.
\end{equation} Notice that for any $1\leq r\leq s-1$,
if $J_r$ does not belong to $U(M,J_s)$, then $J_r\cap J_s=\emptyset$, since otherwise $J_r$ and $J_s$ intersect nontrivially, contradicting the assumption that $M$ is an independent set of $G_P$.
 In view of relation \eqref{inclusion1}, we have
 \[J_s\subseteq \bigcup_{J'\in U(M,J_s)}J'=\mu(M,J_s),\]
 which together  with \eqref{eqn-umu} and \eqref{bz1}, leads to
 \[J_s=\mu(M,J_s)=\bigcup_{J'\in U_{max}(M,J_s)}J'.\]
If $U_{max}(M,J_s)$ has only one element, say, $U_{max}(M,J_s)=\{J_r\}$ for some $1\leq r\leq s-1$,  then $J_s=J_r$, which is contrary to $J_r\subset J_s$.
Next we may assume that $U_{max}(M,J_s)$ has more than one element.
By Lemma \ref{lemm-empty}, the intersection of any two elements of $U_{max}(M,J_s)$ is empty. Thus $J_s$ is the union of some (at least two) nonintersecting connected order ideals, which can not be connected. This contradicts the fact that $J_s$ is a connected order ideal. It follows that $I_{s-1}\subset I_s$ for each $1<s\leq k$, as desired.
\end{proof}

By the above lemma,  we can define a map $\Phi: \  \mathscr{F}(P) \longrightarrow  \mathscr{M}(G_P)$ by letting $$\Phi(F)=\{\Lambda_1^F,\Lambda_2^F,\ldots,\Lambda_n^F\}$$
for any $F\in \mathscr{F}(P)$.
In order to show that $\Phi$ is a bijection, we shall construct
the inverse map of $\Phi$, denoted by $\Psi$. To give a description of $\Psi$, we  need the following  lemma.

\begin{lem}\label{the independent set K}
Given $M\in \mathscr{M}(G_P)$ and $J\in M$, there exists a unique $j$ such that
\begin{equation}\label{single point2}
J\setminus\mu(M,J)=\{j\},
\end{equation}
where $\mu(M,J)$ is given in \eqref{def of M_J}.
Moreover, $j$ is a maximal element of $J$ with respect to the order $\leq_P$, and
\begin{equation}\label{different J}
  J_r\setminus \mu(M,J_r)\neq J_s\setminus\mu(M,J_s)
\end{equation}
for any distinct $J_r,J_s\in M$.
\end{lem}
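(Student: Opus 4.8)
The plan is to prove all three assertions at once by a counting argument built on the fact that $|M|=n$, established in Lemma \ref{maxi}. The three inputs I would combine are: every set $J\setminus\mu(M,J)$ is nonempty; these sets are pairwise disjoint as $J$ ranges over $M$; and they all lie inside the $n$-element ground set $\{1,2,\ldots,n\}$. Since there are exactly $|M|=n$ such sets, pairwise disjoint, their sizes sum to $|\bigcup_{J\in M}(J\setminus\mu(M,J))|\leq n$; but being $n$ nonempty sets their sizes sum to at least $n$. Hence the sum equals $n$ and every $J\setminus\mu(M,J)$ is a singleton. This gives both the existence and uniqueness of $j$ in \eqref{single point2} simultaneously, and \eqref{different J} is then immediate, since the sets $J_r\setminus\mu(M,J_r)$ and $J_s\setminus\mu(M,J_s)$, being disjoint singletons, cannot coincide.

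For nonemptiness I would argue $\mu(M,J)\subsetneq J$ exactly as in the proof of Lemma \ref{maxi}. Indeed $\mu(M,J)\subseteq J$ always; if $\mu(M,J)=J$, then (as $J\neq\emptyset$) $U(M,J)\neq\emptyset$ and $J=\bigcup_{J'\in U_{max}(M,J)}J'$ by \eqref{eqn-umu}. By Lemma \ref{lemm-empty} the members of $U_{max}(M,J)$ are pairwise disjoint, so $J$ would be either equal to a single proper subideal $J'\subset J$ (impossible) or a disjoint union of at least two nonempty connected order ideals, contradicting the connectedness of $J$. Hence $J\setminus\mu(M,J)\neq\emptyset$.

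The disjointness is the crucial step, and it is where non-adjacency enters. Suppose some $x$ lies in both $J_r\setminus\mu(M,J_r)$ and $J_s\setminus\mu(M,J_s)$ for distinct $J_r,J_s\in M$. Then $x\in J_r\cap J_s$, so $J_r$ and $J_s$ intersect; since $M$ is an independent set of $G_P$ they are not adjacent, which forces $J_r\subset J_s$ or $J_s\subset J_r$, say $J_r\subset J_s$. But then $J_r\in U(M,J_s)$, whence $x\in J_r\subseteq\mu(M,J_s)$, contradicting $x\notin\mu(M,J_s)$. Thus the sets $J\setminus\mu(M,J)$ are pairwise disjoint, and the pigeonhole argument of the first paragraph applies.

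Finally, writing $J\setminus\mu(M,J)=\{j\}$, I would verify that $j$ is a maximal element of $J$ with respect to $\leq_P$. If some $y\in J$ satisfied $j<_P y$ with $y\neq j$, then $y\neq j$ forces $y\in\mu(M,J)$, so $y\in J'$ for some $J'\in U(M,J)$; since $J'$ is an order ideal and $j<_P y\in J'$, we get $j\in J'\subseteq\mu(M,J)$, contradicting $j\notin\mu(M,J)$. The main obstacle is less any single estimate than organizing the count correctly: recognizing that the single-element conclusion should not be proved one ideal at a time, but falls out of pairwise disjointness together with $|M|=n$, with the nonemptiness input already supplied by Lemma \ref{maxi}.
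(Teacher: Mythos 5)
Your proof is correct. It differs from the paper's in organization rather than in substance, but the difference is real enough to be worth noting. The paper fixes a linear extension $J_1,\ldots,J_n$ of $(M,\subseteq)$, forms the partial unions $I_s=\bigcup_{r\leq s}J_r$, imports the strict chain $\emptyset\neq I_1\subset\cdots\subset I_n\subseteq\{1,\ldots,n\}$ from the proof of Lemma \ref{maxi} to get $|I_s\setminus I_{s-1}|=1$, and then identifies $J_s\setminus\mu(M,J_s)$ with $I_s\setminus I_{s-1}$ (using that each earlier $J_r$ is either contained in $J_s$ or disjoint from it); the singleton property, the distinctness \eqref{different J}, and in effect the partition of $\{1,\ldots,n\}$ all drop out of that identification. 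You instead avoid choosing any ordering: you prove nonemptiness of $J\setminus\mu(M,J)$ directly (by the same connectedness-plus-Lemma \ref{lemm-empty} argument the paper uses inside Lemma \ref{maxi}), prove pairwise disjointness of these sets by the dichotomy that non-adjacent intersecting ideals must be nested, and then finish by pigeonhole against $|M|=n$. The key combinatorial input is identical in both treatments --- non-adjacency in $G_P$ forces containment or disjointness --- but your symmetric packaging makes the disjointness (hence \eqref{different J}) an explicit standalone step rather than a byproduct of the telescoping chain, which is arguably cleaner and avoids re-tracing the bookkeeping of Lemma \ref{maxi}. Your maximality argument for $j$ is the same as the paper's. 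No gaps.
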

\begin{proof}
By Lemma \ref{maxi}, we see that each maximum independent set of $G_P$ should contain $n$ vertices. Suppose that $M=\{J_1,J_2,\ldots,J_n\}$. As in the proof of Lemma \ref{maxi}, we may assume that
\begin{align}\label{eqn-cond}
r<s \mbox{ whenever } J_r\subset J_s.
\end{align}
For $1\leq s\leq n$, let
\[I_s=\bigcup_{1\leq r\leq s}J_r.\ \ \ \ \  \]
By \eqref{inclusion},  we see that
\begin{equation}\label{X}
\emptyset\neq I_1\subset I_2\subset\cdots\subset I_n\subseteq \{1,2,\ldots,n\}.
\end{equation}
Therefore, if setting  $I_0=\emptyset$, we obtain that for  $1\leq s\leq n$,
\begin{equation}\label{benb}
|I_s\setminus I_{s-1}|=1.
\end{equation}

Let $J=J_s$ for some $1\leq s\leq n$. In view of \eqref{def of M_J} and \eqref{eqn-cond}, we get that
$$\mu(M,J_s)= \bigcup_{J'\in M,~J'\subset J_s}J'=\bigcup_{1\leq r\leq s-1, J_r\subset J_s} J_r\subseteq I_{s-1}.$$
Thus we have
\begin{equation}\label{sequence properly include}
  J\setminus\mu(M,J)=J_s\setminus \mu(M,J_s)=J_s\setminus I_{s-1}=I_s\setminus I_{s-1},
\end{equation}
where the second equality follows from the fact that for any $1\leq r\leq s-1$, either $J_r\subset J_s$ or $J_r\cap J_s=\emptyset$.
In view of \eqref{benb} and \eqref{sequence properly include}, we arrive at \eqref{single point2} and \eqref{different J}.

It remains to show that the unique element $j$ of $J_s\setminus \mu(M,J_s)$ is a maximal element of $J_s$ with respect to the order $\leq_P$.
Suppose that $j$ is not a maximal element of $J_s$. Then there exists a maximal element $i$ of $J_s$ such that $j<_P i$. By \eqref{single point2} and $j\neq i$, we see that $i\in \mu(M,J_s)$. Therefore, there exists some $J'\subset J_s$ of and $J'\in M$
such that $i\in J'$. Since $J'$ is an order ideal of $P$, we get
$j\in J'\subseteq \mu(M,J_s)$, contradicting with the fact $j\not\in \mu(M,J_s)$.
\end{proof}

For any $M\in \mathscr{M}(G_P)$, it follows from \eqref{single point2} and \eqref{different J} that
\[\{1,2,\ldots,n\}=\biguplus_{J\in M} J\setminus\mu(M,J).\]
Let $F_M$ be the poset on $\{1,2,\ldots,n\}$  such that
 $i<_{F_M} j $  if and only if  $J_a\subset J_b$,
where $J_a$ and $J_b$ are the two connected order ideals in $M$ satisfies $J_a\setminus \mu(M,J_a)=\{i\}$, $J_b\setminus \mu(M,J_b)=\{j\}$.
The following result show an important property
for principal order ideals of the poset $F_M$.

\begin{lem}\label{the principal ideal of F}
Given $M\in \mathscr{M}(G_P)$, let $F_M$ be the poset defined as above.
Then for any $1\leq j\leq n$ we have $\Lambda_{j}^{F_M}=\{i\mid i\leq_{F_M}j\}=J$, where $J\in M$ satisfying $J\setminus\mu(M,J)=\{j\}$ as in Lemma \ref{the independent set K}.
\end{lem}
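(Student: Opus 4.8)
The plan is to translate the claim into a purely set-theoretic identity and then prove it by double inclusion. By Lemma~\ref{the independent set K}, for each $i\in\{1,2,\ldots,n\}$ there is a unique element of $M$, which I will denote $A_i$, satisfying $A_i\setminus\mu(M,A_i)=\{i\}$; in particular $i\in A_i$ and $i\notin\mu(M,A_i)$. Since $|M|=n$ by Lemma~\ref{maxi} and the sets $A_i$ are pairwise distinct (again by Lemma~\ref{the independent set K}), the assignment $i\mapsto A_i$ is a bijection from $\{1,2,\ldots,n\}$ onto $M$. The defining relation of $F_M$ then reads $i<_{F_M}j$ if and only if $A_i\subset A_j$, so $i\leq_{F_M}j$ if and only if $A_i\subseteq A_j$. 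Fixing $j$ and writing $J=A_j$, the goal $\Lambda_j^{F_M}=J$ becomes the set identity $\{i\mid A_i\subseteq J\}=J$, which I will establish by showing that each side is contained in the other.

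For the inclusion $\Lambda_j^{F_M}\subseteq J$, suppose $A_i\subseteq J$. Then $i\in A_i\subseteq J$, so $i\in J$; this direction is immediate.

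The reverse inclusion $J\subseteq\Lambda_j^{F_M}$ is where the real work lies. Given $i\in J$, I must show $A_i\subseteq J$. Both $A_i$ and $J$ are vertices of $G_P$ lying in the independent set $M$, hence non-adjacent; by the adjacency criterion recalled in the proof of Lemma~\ref{lemm-empty}, this forces $A_i\cap J=\emptyset$, or $A_i\subseteq J$, or $J\subseteq A_i$. The element $i$ lies in both $A_i$ and $J$, so the first alternative is excluded. The crux is to rule out $J\subsetneq A_i$: if this held, then $J\in U(M,A_i)$, so $J\subseteq\mu(M,A_i)$ by \eqref{eqn-umu}, and therefore $i\in J\subseteq\mu(M,A_i)$, contradicting $i\notin\mu(M,A_i)$. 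Thus $A_i\subseteq J$, which completes the reverse inclusion and hence the proof. I expect this last exclusion step to be the main obstacle, since it is precisely the point at which the maximality built into $\mu(M,A_i)$---and thus the defining structure of a maximum independent set---must be invoked.
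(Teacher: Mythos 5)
Your proof is correct, but it takes a genuinely different route from the paper's. The paper proves the lemma by Noetherian induction on the poset $F_M$: for a minimal $j$ it checks $\Lambda_j^{F_M}=\{j\}=J$ directly, and in the inductive step it writes $\Lambda_j^{F_M}=\{j\}\cup\bigcup_{i<_{F_M}j}\Lambda_i^{F_M}$, applies the induction hypothesis to each $i<_{F_M}j$, and identifies the union with $\{j\}\cup\mu(M,J)=J$. You instead reformulate the claim as the set identity $\{i\mid A_i\subseteq J\}=J$ and prove it by double inclusion, with the only nontrivial step being the exclusion of $J\subsetneq A_i$ via $J\subseteq\mu(M,A_i)$ against $i\notin\mu(M,A_i)$. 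Both arguments rest on the same two ingredients (the non-adjacency trichotomy for members of an independent set and the singleton property $J\setminus\mu(M,J)=\{j\}$ from Lemma~\ref{the independent set K}), but your version avoids induction entirely and is arguably more transparent about exactly which facts are used; the paper's inductive version has the side benefit of exhibiting the recursion $J=\{j\}\cup\mu(M,J)$ explicitly, a decomposition it reuses later (e.g.\ in the proof of Theorem~\ref{def-descent}). One small point worth making explicit in your write-up: the existence of $A_i$ for \emph{every} $i\in\{1,\ldots,n\}$ (not just uniqueness) comes from combining $|M|=n$ with the injectivity of $J\mapsto J\setminus\mu(M,J)$ from \eqref{different J}; you do say this, and it is needed for the translation $i\leq_{F_M}j\iff A_i\subseteq A_j$ to make sense for all $i$.
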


\noindent
\emph{Proof.}
We use the principle of Noetherian induction.

If $j$ is a minimal element of ${F_M}$ with respect to the order $\leq_{F_M}$, then $J$ is also a minimal element of $M$ when $M$ is regarded as a poset ordered by set inclusion. Hence $\Lambda_j^{{F_M}}=\{j\}$ and there exists no $J'\in M$ such that $J'\subset J$, which yields that $\mu(M,J)=\emptyset.$ So $J=\{j\}\cup \mu(M,J)=\{j\}$, and then $\Lambda_j^{F_M}=J$.

Suppose that $j$ is not a minimal element of ${F_M}$ (with respect to the order $\leq_{F_M}$) and $\Lambda_i^{{F_M}}=J'$ holds for any $i<_{{F_M}}j$, where  $J'\setminus \mu(M,J')=\{i\}$. The construction of ${F_M}$ tells us that   $i<_{{F_M}}j$ if and only if $J'\subset J$. Since $\Lambda_{i}^{{F_M}}\subset \Lambda_j^{{F_M}}$ holds for each $i<_{{F_M}}j$, we have
\begin{align*}
  \Lambda_j^{{F_M}} =\{i\mid i\leq_{{F_M}}j\}=\{j\}\cup\left( \bigcup_{i<_{{F_M}}j}\Lambda_{i}^{{F_M}}\right).
\end{align*}
Then by the induction hypothesis, we get that
\begin{align*}
  \Lambda_j^{{F_M}}=\{j\}\cup \left(\bigcup_{J'\in M,~J'\subset J}J'\right)= \{j\}\cup \mu(M,J)= J.  \tag*{\qed}
\end{align*}

We proceed to examine more structure of $F_M$, and obtain the following result.

\begin{lem}\label{M is a forest}
For any $M\in \mathscr{M}(G_P)$, the poset $F_M$ is a $P$-forest.
\end{lem}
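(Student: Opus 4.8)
The plan is to verify directly the three defining properties of a $P$-forest for the poset $F_M$, leaning throughout on Lemma~\ref{the principal ideal of F}, which identifies the principal order ideals of $F_M$ with the members of $M$. Writing $J_j$ for the unique element of $M$ with $J_j\setminus\mu(M,J_j)=\{j\}$ (which exists by Lemma~\ref{the independent set K}), Lemma~\ref{the principal ideal of F} gives $\Lambda_j^{F_M}=J_j$, and the construction of $F_M$ says $i\leq_{F_M}j$ if and only if $J_i\subseteq J_j$. Thus $j\mapsto J_j$ is a bijection from $\{1,2,\ldots,n\}$ onto $M$ that translates the order of $F_M$ into inclusion of order ideals, so every condition reduces to inclusion relations among members of the independent set $M$. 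In particular, the condition that each subtree of $F_M$ be a connected order ideal of $P$ is immediate: the subtree rooted at $j$ is exactly $\Lambda_j^{F_M}=J_j\in M\subseteq\mathcal{J}_{conn}(P)$.

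Next I would show that $F_M$ is a forest, i.e.\ every node is covered by at most one node. It suffices to prove that for each $i$ the up-set $\{j\mid i\leq_{F_M}j\}$ is a chain, since this forces at most one cover. Under the bijection this up-set corresponds to $\{J\in M\mid J_i\subseteq J\}$. If $J_i\subseteq J_a$ and $J_i\subseteq J_b$ with $J_a,J_b\in M$, then $J_a\cap J_b\supseteq J_i\neq\emptyset$; since $M$ is independent, $J_a$ and $J_b$ are non-adjacent in $G_P$, and by the adjacency characterization recalled in the proof of Lemma~\ref{lemm-empty}, non-adjacency together with a nonempty intersection forces $J_a\subseteq J_b$ or $J_b\subseteq J_a$. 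Hence the up-set is totally ordered by inclusion, and $F_M$ is a forest.

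Finally, for two incomparable nodes $i,j$ of $F_M$ I would verify that $\Lambda_i^{F_M}\cup\Lambda_j^{F_M}=J_i\cup J_j$ is a disconnected order ideal. Incomparability means $J_i\not\subseteq J_j$ and $J_j\not\subseteq J_i$, and since $J_i,J_j$ are non-adjacent in $G_P$, the same characterization now forces $J_i\cap J_j=\emptyset$. A union of two order ideals is again an order ideal, so $J_i\cup J_j$ is an order ideal; it remains to see it is disconnected. No cover relation of $P$ can join $J_i$ to $J_j$: if $a\in J_i$, $b\in J_j$ and $a<_P b$, then $a\in J_j$ because $J_j$ is an order ideal, contradicting $J_i\cap J_j=\emptyset$, and the symmetric case is identical. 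Since the cover relations of an order ideal agree with those induced from $P$, the Hasse diagram of $J_i\cup J_j$ is the disjoint union of the nonempty Hasse diagrams of $J_i$ and $J_j$, hence disconnected.

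I expect the only delicate point to be this last one, namely arguing rigorously that the absence of cross cover-relations makes the Hasse diagram of $J_i\cup J_j$ disconnected; everything else follows mechanically once the bijection of Lemma~\ref{the principal ideal of F} reduces each defining property of a $P$-forest to a statement about inclusion or disjointness of members of the maximum independent set $M$.
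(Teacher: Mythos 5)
Your proof is correct and follows essentially the same route as the paper: both arguments reduce everything, via Lemma~\ref{the principal ideal of F}, to the fact that two members of the independent set $M$ with nonempty intersection must be nested, which yields the forest property and the disjointness $J_i\cap J_j=\emptyset$ for incomparable nodes. The only cosmetic difference is that you derive the forest property by showing up-sets are chains, whereas the paper argues by contradiction from a node covered by two incomparable elements; and you spell out the disconnectedness of the Hasse diagram of $J_i\cup J_j$, which the paper treats as immediate.
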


\begin{proof}
We first show that ${F_M}$ is   a forest.
Suppose otherwise that  ${F_M}$ is not a forest. Then there exists an element $i$ in ${F_M}$ such that $i$ is covered by at least two elements of ${F_M}$, say $j,k$. Thus $j$ and $k$ must be incomparable with respect to the order $\leq_{F_M}$. (Recall that in a poset $P$, we say that an element
$u$ is  covered by an element $v$ if $u<_P v$ and there is no element $w$ such that $u<_P w<_P v$.)
By Lemma \ref{the independent set K}, there exist $J_a, J_b,J_c\in M$ such that $J_a\setminus \mu(M,J_a)=\{i\}$, $J_b\setminus \mu(M,J_b)=\{j\}$ and $J_c\setminus \mu(M,J_c)=\{k\}$.
By the construction of  ${F_M}$, we see that  $J_a\subset J_b$, $J_a\subset J_c$  and $J_b$, $J_c$ are incomparable in $M$ with respect to the set inclusion order.  Hence, $J_b\not\subset J_c$, $J_c\not\subset J_b$  and
 $ (J_b\cap J_c)\supseteq J_a\neq \emptyset$. This implies that $J_b$ and $J_c$ are adjacent in the graph   $G_P$, contradicting  the fact that $  M$ is an independent set.

We proceed to show that ${F_M}$ is a $P$-forest. By Lemma \ref{the principal ideal of F}, for each element $i$ of $F_M$, the subtree
 $\Lambda_i^{{F_M}}=\{j\mid j\leq_{{F_M}}i\}$ of ${F_M}$
 rooted at $i$ is a connected order ideal of $P$.
To verify that ${F_M}$ is a $P$-forest, we still need to check that for $1\leq i,j\leq n$, if  $i$ and $j$  are incomparable in ${F_M}$, then the union $\Lambda_{i}^{{F_M}}\cup\Lambda_{j}^{{F_M}}$ is a disconnected order ideal of $P$.
By Lemma \ref{the independent set K}, assume that $J_a$ and $J_b$ are the connected order ideals in $M$ such that $J_a\setminus \mu(M,J_a)=\{i\}$ and $J_b\setminus \mu(M,J_b)=\{j\}$.
By Lemma \ref{the principal ideal of F},  we have $J_a=\Lambda_{i}^{{F_M}}$ and $J_b=\Lambda_{j}^{{F_M}}$. Since  $i$ and $j$  are incomparable in ${F_M}$, we obtain that  $J_a\not\subset J_b$ and $J_b\not\subset J_a$. On the other hand, $J_a$ and $J_b$ are not adjacent in the graph $G_P$. This allows us to conclude that  $J_a\cap J_b=\emptyset$. Therefore, as an order ideal of $P$, the union $J_a\cup J_b$ is disconnected, so is the union $\Lambda_{i}^{{F_M}}\cup\Lambda_{j}^{{F_M}}$. Hence ${F_M}$ is a $P$-forest.
\end{proof}

With the above lemma, we can define the inverse map of $\Phi$, denoted by $\Psi: \mathscr{M}(G_P)\rightarrow \mathscr{F}(P)$, by letting
$$\Psi(M)=F_M$$
for any $M\in \mathscr{M}(G_P)$.

Now we  are in a position to give a proof of Theorem \ref{The First Main Result}.

\noindent \textit{Proof of Theorem \ref{The First Main Result}.}
We first prove that $\Psi(\Phi (F))=F$ for any $P$-forest $F$ and 
$\Phi(\Psi (M))=M$ for any maximum independent set $M$ of $G_P$. 
The proof of the former statement will be given below, and the proof of the latter will be omitted here.
Given a $P$-forest $F$, by definition, the image of $F$ under the map $\Phi$ is  $\Phi(F)=\{\Lambda_1^F, \ldots, \Lambda_n^F\}$, which is a maximum independent set of $G_P$ by Lemma \ref{maxi}. Of course, we have $\Lambda_i^F\subset\Lambda_j^F$ if and only if $i<_F j$.  For each $1\leq i\leq n$ let $J_i=\Lambda_i^F$ and then denote $M=\{J_1,J_2,\ldots,J_n\}$. We proceed to show that $\Psi(M)=F_M=F$. Note that both $F_M$ and $F$ are posets on $\{1,2,\ldots,n\}$. It remains to show that
$i<_{F_m} j$ if and only if $i<_{F} j$ for any $i,j\in \{1,2,\ldots,n\}$.
Recall that for $1\leq i\leq n$ the principal order ideal $\Lambda_i^F$ is the subtree of $F$ rooted at $i$. Hence 
\[J_i\setminus \mu(M,J_i)=\Lambda_i^F\setminus \left( \bigcup_{j<_F i}\Lambda_j^F  \right)=\{i\}\]
holds for each $1\leq i\leq n$.
By the construction of ${F_M}$, we know that $i<_{F_M}j$ if and only if $J_i\subset J_j$. On the other hand, in the given $P$-forest $F$, $i<_F j$ if and only if $\Lambda_i^F\subset\Lambda_j^F$. Since $J_i=\Lambda_i^F$ for each $1\leq i\leq n$, it follows that $i<_{F_M}j$ if and only if $i<_F j$. Thus $F_M=F$, as desired.

The quality $\Phi(\Psi (M))=M$ ensures that $\Phi$ is onto. Hence $\Phi$ is bijective.
\qed

We take the poset $P$ in Figure \ref{a typcial example} as an example to illustrate Theorem \ref{The First Main Result} and its proof. There are there $P$-forests $F_1, F_2$ and $F_3$ as shown in Figure \ref{a typcial example}.
The graph $G_P$, as shown in Figure \ref{posets}, has three maximum independent sets:
\begin{eqnarray*}
  M^1&=&\{\Lambda_3^P,\Lambda_4^P,\Lambda_6^P,\Lambda_1^P,\Lambda_2^P,\Lambda_{2,5}^P\}, \\[5pt]
 M^2&=&\{\Lambda_3^P,\Lambda_4^P,\Lambda_6^P,\Lambda_1^P,\Lambda_{1,5}^P,\Lambda_{2,5}^P\}, \\[5pt]
  M^3&=&\{\Lambda_3^P,\Lambda_4^P,\Lambda_6^P,\Lambda_5^P,\Lambda_{1,5}^P,\Lambda_{2,5}^P\}.
\end{eqnarray*}
The principal order ideals of $F_1$ is as shown in Figure \ref{The P-forest F}.

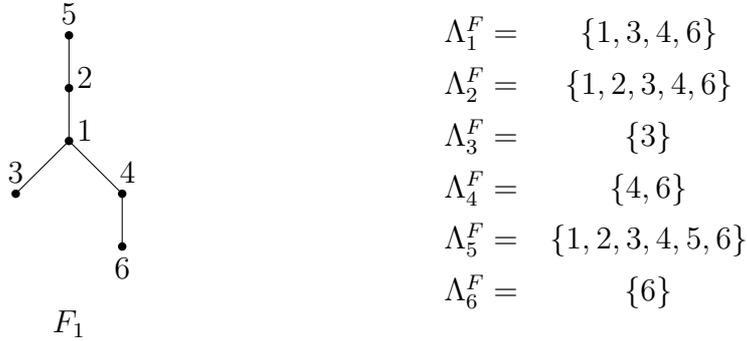
\begin{figure}[h]
\begin{minipage}{0.45\textwidth}
\begin{center}
    \begin{tikzpicture}[scale=0.7]
\draw (0,0)--(1,1)--(2,0)--(2,-1) (1,1)--(1,2)--(1,3);
\fill (1,3) circle(0.08cm); \coordinate[label=above:$5$] (1) at (1,3);
\fill (1,2) circle(0.08cm); \coordinate[label=above:$2$] (1) at (1.3,1.8);
\fill (0,0) circle(0.08cm); \coordinate[label=above:$3$] (1) at (0,0);
\fill (1,1) circle(0.08cm); \coordinate[label=above:$1$] (1) at (1.3,0.8);
\fill (2,0) circle(0.08cm); \coordinate[label=above:$4$] (1) at (2.1,0);
\fill (2,-1) circle(0.08cm); \coordinate[label=below:$6$] (1) at (2,-1);
\coordinate[label=below:$F_1$] (4) at (1,-2);
\end{tikzpicture}
\end{center}
\end{minipage}
\begin{minipage}{.45\textwidth}
\begin{center}
\begin{tabular}{cc}
$\Lambda_1^F=$&$\{1,3,4,6\}$ \\ [5pt]
$\Lambda_2^F=$&$\{1,2,3,4,6\}$ \\ [5pt]
$\Lambda_3^F=$&$\{3\}$ \\ [5pt]
$\Lambda_4^F=$&$\{4,6\}$ \\ [5pt]
$\Lambda_5^F=$&$\{1,2,3,4,5,6\}$  \\ [5pt]
$\Lambda_6^F=$&$\{6\}$  \\ [5pt]
\end{tabular}
\end{center}
\end{minipage}
\caption{The $P$-forest $F_1$ and its principal order ideals.}
\label{The P-forest F}
\end{figure}
\noindent
By the construction of $\Phi$, we have
\begin{eqnarray*}
 \Phi(F_1) &=&\{\Lambda_1^{F_1},\Lambda_2^{F_1},\ldots,\Lambda_6^{F_1}\}  \\
   &=& \big\{\{1,3,4,6\},\{1,2,3,4,6\},\{3\},\{4,6\},\{1,2,3,4,5,6\},\{6\}\big\},
\end{eqnarray*}
which coincides with $M^1$.
One can also  verify  that $\Phi(F_2)=M^2$ and $\Phi(F_3)=M^3$.

On the other hand, for the maximum independent set $M^1$, if we set $J_1=\Lambda_1^P=\{1,3,4,6\},$ $J_2=\Lambda_2^P=\{1,2,3,4,6\},$ $J_3=\Lambda_3^P=\{3\},$ $J_4=\Lambda_4^P=\{4,6\},$ $J_5=\Lambda_{2,5}^P=\{1,2,3,4,5,6\},$ $J_6=\Lambda_6^P=\{6\}$, then it is straightforward to verify that $J_i\setminus \mu(M^1,J_i)=\{i\}$ for $1\leq i\leq 6$. And then, by definition, in the $P$-forest $F_{M^1}$ there is $2<_{F_{M^1}}5$, $1<_{F_{M^1}}2$, $3<_{F_{M^1}}1$, $4<_{F_{M^1}}1$, $6<_{F_{M^1}}4$. One  readily sees that $F_{M^1}=F_1$. Similarly, one can verify that $F_{M^2}=F_2$ and $F_{M^3}=F_3$.

\section{$F_P(\textbf{x})$ for naturally labeled $P$} \label{sect-main}

The main objective of this section is to prove Theorems \ref{def-descent} and \ref{the seconde main result}. The proofs are based on some properties of certain subgraphs of $G_P$. Although we require that the poset $P$ in Theorems \ref{def-descent} and \ref{the seconde main result} be naturally labeled, these properties of $G_P$ are valid for any finite poset $P$.

To begin with, let us first introduce some notations. For an order ideal $J$ of $P$,  let $gs(J)$ denote    the set of maximal elements of $J$ with respect to the order $\leq_P$, namely,
\[gs(J)=\{i\in J\mid \text{ there exists no } j\in J \text{ such that } i<_P j\}.\]
This set is also called the generating set of $J$.
Clearly, when $gs(J)=\{i_1,i_2,\ldots,i_k\}$, we have
$J=\Lambda_{i_1}^P\cup\Lambda_{i_2}^P\cup\cdots\cup\Lambda_{i_k}^P.$
Let $\chi_J$  be the subgraph of $G_P$ induced by the vertex subset $\{\Lambda_{i_1}^P,\Lambda_{i_2}^P,\ldots,\Lambda_{i_k}^P\}$.
We have the following  assertion.

\begin{lem}\label{induced graph}
For any connected order ideal $J$ of $P$, the graph $\chi_J$ is connected.
\end{lem}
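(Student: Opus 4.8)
The plan is to argue by contradiction, exploiting the defining property that a connected order ideal has a connected Hasse diagram. First I would record the exact adjacency rule inside $\chi_J$. Writing $gs(J)=\{i_1,\ldots,i_k\}$, each $\Lambda_{i_s}^P$ is a principal order ideal, hence connected, hence a genuine vertex of $G_P$; and since $i_s$ is a maximal element of $J$, any two distinct $i_s,i_t$ are $\leq_P$-incomparable, so neither $\Lambda_{i_s}^P\subset\Lambda_{i_t}^P$ nor the reverse holds. By the characterization of non-adjacency recalled in the proof of Lemma~\ref{lemm-empty}, it follows that in $\chi_J$ the vertices $\Lambda_{i_s}^P$ and $\Lambda_{i_t}^P$ are adjacent precisely when $\Lambda_{i_s}^P\cap\Lambda_{i_t}^P\neq\emptyset$. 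Thus $\chi_J$ is exactly the intersection graph of the principal ideals generated by the maximal elements of $J$.

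Next, suppose for contradiction that $\chi_J$ is disconnected. Then the maximal elements split into two nonempty blocks $A$ and $B$ (a union of connected components and its complement) with no edge of $\chi_J$ joining $A$ to $B$. Set
\[
J_A=\bigcup_{i_s\in A}\Lambda_{i_s}^P,\qquad J_B=\bigcup_{i_t\in B}\Lambda_{i_t}^P.
\]
Both are order ideals (a union of order ideals is an order ideal), both are nonempty, and $J=J_A\cup J_B$ since $gs(J)=A\cup B$ generates all of $J$.

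The two key steps are then to verify that this partition of the generators genuinely tears the Hasse diagram of $J$ into two pieces. First, $J_A\cap J_B=\emptyset$: any common element $z$ would lie in some $\Lambda_{i_s}^P$ with $i_s\in A$ and in some $\Lambda_{i_t}^P$ with $i_t\in B$, forcing $\Lambda_{i_s}^P\cap\Lambda_{i_t}^P\neq\emptyset$ and hence an edge across the partition, a contradiction. Second, no covering relation of $J$ joins $J_A$ to $J_B$: if $x<_P y$ is a cover in $J$, pick a maximal element $i_s$ of $J$ with $y\leq_P i_s$; then $x<_P y\leq_P i_s$ places both $x$ and $y$ inside the single principal ideal $\Lambda_{i_s}^P$, so both lie in $J_A$ or both lie in $J_B$. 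Combined with $J_A\cap J_B=\emptyset$, these two facts show the Hasse diagram of $J$ is the disjoint union of its restrictions to $J_A$ and to $J_B$ with no edges between, so it is disconnected, contradicting the hypothesis that $J$ is a connected order ideal. This forces $\chi_J$ to be connected.

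I expect the main obstacle to be the second key step, namely translating the graph-theoretic non-adjacency of the generating principal ideals into a true disconnection of the poset; the crux is the observation that every element of $J$ lies beneath some maximal element, so each covering edge is swallowed entirely by one principal ideal and cannot straddle the partition. As an alternative that avoids the contradiction, one can argue directly: given two generators $i_s,i_t$, take a path $i_s=x_0,x_1,\ldots,x_m=i_t$ in the connected Hasse diagram of $J$, choose for each $x_\ell$ a maximal element $m(x_\ell)\geq_P x_\ell$ with $m(x_0)=i_s$ and $m(x_m)=i_t$, and note that consecutive $x_\ell,x_{\ell+1}$ lie in a common principal ideal, so $\Lambda_{m(x_\ell)}^P$ and $\Lambda_{m(x_{\ell+1})}^P$ are equal or adjacent; this yields a walk in $\chi_J$ from $\Lambda_{i_s}^P$ to $\Lambda_{i_t}^P$.
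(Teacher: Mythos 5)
Your proof is correct and follows essentially the same route as the paper's: both partition the generating set into a ``reachable'' block and its complement, form the corresponding union-of-principal-ideals $J_A$ and $J_B$, and play the absence of edges in $\chi_J$ against the connectedness of $J$ (the paper deduces $J_A\cap J_B\neq\emptyset$ from connectedness and then finds an edge, while you deduce disjointness from the lack of edges and then disconnect the Hasse diagram — the same argument arranged contrapositively). Your explicit verification that every covering relation of $J$ lies entirely inside one principal ideal $\Lambda_{i_s}^P$ is a detail the paper leaves implicit, and is a welcome addition.
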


\begin{proof} Assume that $gs(J)=\{i_1,i_2,\ldots,i_k\}$. The proof is immediate if $k=1$. In the following we shall assume that $k\geq 2$.
Define
\[
  \mathrm{Conn}(i_1)=\big\{i_r\in gs(J)\,|\, \text{there is a path in $\chi_J$ connecting $\Lambda_{i_1}^P$ and $\Lambda_{i_r}^P$}  \}.
\]
Note that $i_1$ is always contained in $\mathrm{Conn}(i_1)$.
It is enough  to show that $\mathrm{Conn}(i_1)=gs(J)$.
Otherwise, suppose that $\mathrm{Conn}(i_1)\neq gs(J)$.
Let
\[I_1=\bigcup_{j\in \mathrm{Conn}(i_1)}\Lambda_{ j}^P\ \ \ \ \text{and}\ \ \ \ I_2=\bigcup_{j\in gs(J)\setminus \mathrm{Conn}(i_1)}\Lambda_{ j}^P.\]
Then both $I_1$ and $I_2$ are nonempty subsets of $J$ satisfying that $I_1\cup I_2=J$, and both $I_1$ and $I_2$ are order ideals of $P$. Since  $J$ is a connected  order ideal of $P$, it follows that  $I_1\cap I_2\neq \emptyset$. Thus there exists some $u\in \mathrm{Conn}(i_1)$ and some $v\in gs(J)\setminus \mathrm{Conn}(i_1)$  such that $\Lambda_{u}^P  \cap \Lambda_{v}^P  \neq \emptyset$. Since both $u$ and $v$ are maximal elements in the connected order ideal $J$, we must have $\Lambda_{u}^P\not\subset\Lambda_{v}^P$
and $\Lambda_{v}^P\not\subset\Lambda_{u}^P$. This means  that $\Lambda_{u}^P$ and $\Lambda_{v}^P$ are adjacent, implying that $v\in \mathrm{Conn}(i_1)$. This leads to a contradiction.
\end{proof}

We also need the following lemma.

\begin{lem}\label{absorbtion}
Let $J$ be  a connected order ideal of $P$, and let $C$ be any connected subgraph of $G_P$. Assume that $J$ is not adjacent to any vertex of $C$. If there exists a vertex $J_a$ of $C$ such that $J_a\subset J $, then $J_b\subset J$ for any vertex $J_b$ of $C$.
\end{lem}

\begin{proof}
We first consider the case when $J_a$ and $J_b$ are adjacent. In this case,  $J_b$ and $J_a$ intersect nontrivially, and so we have  $\emptyset\neq (J_a\cap J_b)$. On the other hand, since $J_a\subset J $, we obtain that
\begin{equation}\label{xc}
\emptyset\neq (J_a\cap J_b)\subset (J\cap J_b).
\end{equation}
Combining \eqref{xc} and the hypothesis that the vertices $J_b$ and $J$ are not adjacent, we get that $J_b\subset J$ or $J\subset J_b$. If $J\subset J_b$, then $J_a\subset J\subset J_b$,
which is impossible because $J_a$ and $J_b$ intersect nontrivially. Hence we have $J_b\subset J$.

We now consider the case when $J_a$ is not adjacent to $J_b$. Since $C$ is connected, there exists a sequence $(J_0=J_a, J_1,\ldots, J_k=J_b)$ ($k\geq 2$) of vertices of $C$  such that $J_i$ is adjacent to $J_{i-1}$ for $1\leq i\leq k$.
By the above argument, $J_1$ is contained in $J$. Therefore,  by a simple recursion we get that $J_b\subset J$.
\end{proof}

For example, let $P$ be
  the poset  given in Figure \ref{The poset P}. The graph  $G_P$ is illustrated in Figure \ref{The graph G_P}, where we adopt the notation $\Lambda_{i,j}^P=\Lambda_i^P\cup\Lambda_j^P$ and $\Lambda_{i,j,k}^P=\Lambda_i^P\cup\Lambda_j^P\cup\Lambda_k^P$. The graph $G_P$ has totally 13 connected components, and among them there are four connected components $C_1,$ $C_2,$  $C_3,$  $C_4$ which have more than one vertex.
 \begin{itemize}
   \item To illustrate the assertion of Lemma \ref{induced graph}, for example,
let $J=\Lambda_{4,5,6}^P$, then we have $gs(J)=\{4,5,6\}$. One can verify that  the subgraph $\chi_J$ of $G_P$ induced by the vertex subset $\{\Lambda_{4}^P,$ $\Lambda_{5}^P,$ $\Lambda_{6}^P\}$ is indeed connected.

   \item To illustrate the assertion of Lemma \ref{absorbtion}, for example, we let $J=\Lambda_{10}^P$, and let $C$ be the connected component $C_1$ of $G_P$, then $\Lambda_5^P\subset J$.  In this case we see that $J'\subset \Lambda_{10}^P$ for any $J'\in V(C_1)$.
 \end{itemize}

 \begin{figure}[h]
\begin{center}
\begin{tikzpicture}[scale=0.9]
\draw (0,1)--(0,2)--(1,1)--(1,2)--(2,1)--(2,2)--(0,1);

\draw (2.5,4)--(2.5,5.5)--(4,4)--(4,5.5);

\draw (0,2)--(2.5,4)--(1,2)--(4,4)--(2,2)--(2.5,4) (0,2)--(4,4);

\draw (2.5,5.5)--(2.5,7)--(4,5.5)--(4,7)--(2.5,5.5);

\fill (0,1) circle(0.05cm); \coordinate[label=below:$1$] (1) at (0,1);
\fill (1,1) circle(0.05cm); \coordinate[label=below:$2$] (1) at (1,1);
\fill (2,1) circle(0.05cm); \coordinate[label=below:$3$] (1) at (2,1);
\fill (0,2) circle(0.05cm); \coordinate[label=below:$4$] (1) at (-0.1,2);
\fill (1,2) circle(0.05cm); \coordinate[label=below:$5$] (1) at (0.9,2);
\fill (2,2) circle(0.05cm); \coordinate[label=below:$6$] (1) at (1.9,2);

\draw (5,2.5)--(5,1.5)--(6,0.5)--(6,2.5)--(7,1.5);
\draw(2.5,4)--(5,2.5)--(4,5.5) (2.5,4)--(6,2.5)--(4,5.5);
\fill (7,1.5) circle(0.05cm); \coordinate[label=below:$8$] (1) at (7,1.5);
\fill (6,2.5) circle(0.05cm); \coordinate[label=below:$11$] (1) at (5.7,2.5);
\fill (6,0.5) circle(0.05cm); \coordinate[label=below:$7$] (1) at (6,0.5);
\fill (5,1.5) circle(0.05cm); \coordinate[label=below:$9$] (1) at (4.7,1.5);
\fill (5,2.5) circle(0.05cm); \coordinate[label=below:$12$] (1) at (4.7,2.5);
\fill (2.5,4) circle(0.05cm); \coordinate[label=above:$13$] (1) at (2.2,4);
\fill (4,4) circle(0.05cm); \coordinate[label=above:$10$] (1) at (4.2,3.3);
\fill (2.5,5.5) circle(0.05cm); \coordinate[label=above:$14$] (1) at (2.2,5.4);

\fill (4,5.5) circle(0.05cm); \coordinate[label=above:$15$] (1) at (4.3,5.3);

\fill (2.5,7) circle(0.05cm); \coordinate[label=above:$16$] (1) at (2.5,7);
\fill (4,7) circle(0.05cm); \coordinate[label=above:$17$] (1) at (4,7);
\end{tikzpicture}
\end{center}
\caption{A naturally labeled poset $P$.}
\label{The poset P}
\end{figure}
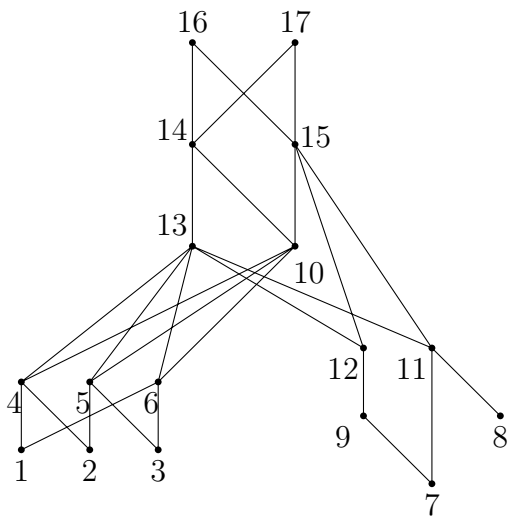

\begin{figure}[h]
\begin{center}
\begin{tikzpicture}[scale=1]
\draw (0,4)--(1,5)--(2,4)--(2,1)--(1,2)--(0,1)--(0,4) (1,2)--(1,5) (2,1)--(2,4) (0,1)--(2,1)  (0,4)--(2,4);
\draw (5,2)--(5,3)--(5,4);
\draw (4,1)--(4,2)--(4,3)--(4,4)--(4,5) (4,3)--(3,2);

\draw (5,1)--(5,2) (6,1)--(6,2);

\fill (0,4) circle(0.1cm); \coordinate[label=above:$\Lambda_{4,6}^P$] (1) at (-0.1,4);
\fill (1,5) circle(0.1cm); \coordinate[label=above:$\Lambda_{4,5}^P$] (1) at (1,5);
\fill (2,4) circle(0.1cm); \coordinate[label=above:$\Lambda_{5,6}^P$] (1) at (2,4);

\fill (0,1) circle(0.1cm); \coordinate[label=below:$\Lambda_{5}^P$] (1) at (0,1);
\fill (1,2) circle(0.1cm); \coordinate[label=below:$\Lambda_{6}^P$] (1) at (1,1.8);
\fill (2,1) circle(0.1cm); \coordinate[label=below:$\Lambda_{4}^P$] (1) at (2,1);

\fill (4,1) circle(0.1cm); \coordinate[label=below:$\Lambda_{13,15}^P$] (1) at (3.7,1);
\fill (4,2) circle(0.1cm); \coordinate[label=above:$\Lambda_{14}^P$] (1) at (4.3,2);
\fill (4,3) circle(0.1cm); \coordinate[label=above:$\Lambda_{15}^P$] (1) at (3.7,3);
\fill (4,4) circle(0.1cm); \coordinate[label=above:$\Lambda_{13}^P$] (1) at (3.7,4);
\fill (4,5) circle(0.1cm); \coordinate[label=above:$\Lambda_{10}^P$] (1) at (3.7,5);
\fill (3,2) circle(0.1cm); \coordinate[label=below:$\Lambda_{10,13}^P$] (1) at (3,2);

\fill (5,1) circle(0.1cm); \coordinate[label=below:$\Lambda_{9}^P$] (1) at (5,1);
\fill (6,1) circle(0.1cm); \coordinate[label=below:$\Lambda_{17}^P$] (1) at (6,1);
\fill (5,2) circle(0.1cm); \coordinate[label=above:$\Lambda_{11}^P$] (1) at (5.3,2);
\fill (5,3) circle(0.1cm); \coordinate[label=above:$\Lambda_{12}^P$] (1) at (5.3,3);

\fill (6,2) circle(0.1cm); \coordinate[label=above:$\Lambda_{16}^P$] (1) at (6.3,2);

\fill (5,5) circle(0.1cm); \coordinate[label=above:$\Lambda_{1}^P$] (1) at (5,5);
\fill (6,5) circle(0.1cm); \coordinate[label=above:$\Lambda_{2}^P$] (1) at (6,5);
\fill (7,5) circle(0.1cm); \coordinate[label=above:$\Lambda_{3}^P$] (1) at (7,5);
\fill (5,4) circle(0.1cm); \coordinate[label=above:$\Lambda_{9,11}^P$] (1) at (5,4);
\fill (6,4) circle(0.1cm); \coordinate[label=above:$\Lambda_{8}^P$] (1) at (6,4);
\fill (7,4) circle(0.1cm); \coordinate[label=above:$\Lambda_{7}^P$] (1) at (7,4);

\fill (8,5) circle(0.1cm); \coordinate[label=above:$\Lambda_{4,5,6}^P$] (1) at (8,5);
\fill (8,4) circle(0.1cm); \coordinate[label=above:$\Lambda_{14,15}^P$] (1) at (8,4);
\fill (8,3) circle(0.1cm); \coordinate[label=above:$\Lambda_{11,12}^P$] (1) at (8,3);
\fill (8,2) circle(0.1cm); \coordinate[label=above:$\Lambda_{16,17}^P$] (1) at (8,2);
\coordinate[label=below:$C_1$] (1) at (1,0.3);
\coordinate[label=below:$C_2$] (1) at (3.7,0.3);
\coordinate[label=below:$C_3$] (1) at (5,0.3);
\coordinate[label=below:$C_4$] (1) at (6,0.3);
\end{tikzpicture}
\end{center}
\caption{The graph $G_P$ associated to the poset $P$ in Figure \ref{The poset P}.}
\label{The graph G_P}
\end{figure}
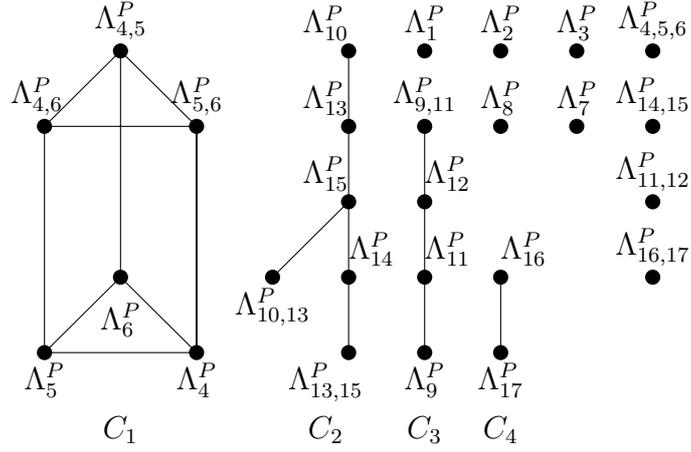

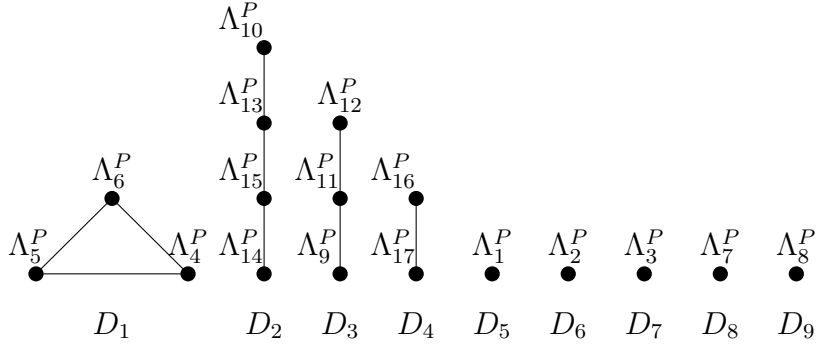
\begin{figure}[h]
\begin{center}
\begin{tikzpicture}[scale=1]
\draw (0,3)--(1,4)--(2,3)--(0,3);

\draw (3,3)--(3,4)--(3,5)--(3,6);

\draw (4,3)--(4,4) (5,3)--(5,4);

\fill (0,3) circle(0.1cm); \coordinate[label=above:$\Lambda_5^P$] (1) at (-0.1,3);
\fill (1,4) circle(0.1cm); \coordinate[label=above:$\Lambda_6^P$] (1) at (1,4);
\fill (2,3) circle(0.1cm); \coordinate[label=above:$\Lambda_4^P$] (1) at (2,3);

\fill (3,3) circle(0.1cm); \coordinate[label=above:$\Lambda_{14}^P$] (1) at (2.7,3);
\fill (3,4) circle(0.1cm); \coordinate[label=above:$\Lambda_{15}^P$] (1) at (2.7,4);
\fill (3,5) circle(0.1cm); \coordinate[label=above:$\Lambda_{13}^P$] (1) at (2.7,5);
\fill (3,6) circle(0.1cm); \coordinate[label=above:$\Lambda_{10}^P$] (1) at (2.7,6);

\fill (4,3) circle(0.1cm); \coordinate[label=above:$\Lambda_{9}^P$] (1) at (3.7,3);
\fill (5,3) circle(0.1cm); \coordinate[label=above:$\Lambda_{17}^P$] (1) at (4.7,3);
\draw (4,5)--(4,4);
\fill (4,4) circle(0.1cm); \coordinate[label=above:$\Lambda_{11}^P$] (1) at (3.7,4);
\fill (5,4) circle(0.1cm); \coordinate[label=above:$\Lambda_{16}^P$] (1) at (4.7,4);

\fill (6,3) circle(0.1cm); \coordinate[label=above:$\Lambda_{1}^P$] (1) at (6,3);
\fill (7,3) circle(0.1cm); \coordinate[label=above:$\Lambda_{2}^P$] (1) at  (7,3);
\fill (8,3) circle(0.1cm); \coordinate[label=above:$\Lambda_{3}^P$] (1) at (8,3);
\fill (4,5) circle(0.1cm); \coordinate[label=above:$\Lambda_{12}^P$] (1) at (4,5);
\fill (9,3) circle(0.1cm); \coordinate[label=above:$\Lambda_{7}^P$] (1) at (9,3);
\fill (10,3) circle(0.1cm); \coordinate[label=above:$\Lambda_{8}^P$] (1) at (10,3);

\coordinate[label=above:$D_{1}$] (1) at (1,2);
\coordinate[label=above:$D_{2}$] (1) at (3,2);
\coordinate[label=above:$D_{3}$] (1) at (4,2);
\coordinate[label=above:$D_{4}$] (1) at (5,2);
\coordinate[label=above:$D_{5}$] (1) at (6,2);
\coordinate[label=above:$D_{6}$] (1) at (7,2);
\coordinate[label=above:$D_{7}$] (1) at (8,2);
\coordinate[label=above:$D_{8}$] (1) at (9,2);
\coordinate[label=above:$D_{9}$] (1) at (10,2);
\end{tikzpicture}
\end{center}
\caption{The subgraph $H_P$ induced on $G_P$ by principal order ideals.}
\label{The graph H_P}
\end{figure}

Now we turn to study a special subgraph of $G_P$, which is induced by the principal order ideals of $P$. This graph also plays an important role in our future proofs.
Recall that the set of principal order ideals of $P$ consists of
$\Lambda_{1}^P,\Lambda_{2}^P,\ldots,\Lambda_{n}^P$. Let $H_P$ be the subgraph of $G_P$ induced by the vertex subset  $\{\Lambda_{1}^P,\Lambda_{2}^P,\ldots,\Lambda_{n}^P\}$. For example, for the poset $P$ and the graph $G_P$ as illustrated in Figures \ref{The poset P} and \ref{The graph G_P},  the graph $H_P$ is as shown in Figure \ref{The graph H_P}.
It follows from Lemma \ref{induced graph} that for a given connected order ideal $J$ the induced subgraph $\chi_J$ must be a subgraph of certain connected component of $H_P$, where $\chi_J$ is defined as before Lemma \ref{induced graph}. The graph $H_P$ admits the following interesting properties.

\begin{lem}\label{intersect trivially}
Suppose that $H_P$ has connected components $D_1, D_2,\ldots, D_\ell$.
We have the following two assertions.
\begin{description}
  \item[(1)] Let $1\leq r< s\leq \ell$, and let $J_a,J_b$ be two connected order ideals of $P$. If $\chi_{J_a}$ is a subgraph of $D_r$ while $\chi_{J_b}$ is a subgraph of $D_s$, then $J_a$ and $J_b$ are not adjacent in $G_P$.

  \item[(2)] Given a connected order ideal $J$, suppose that $\chi_J$ is a subgraph of the connected component $D_r$ of $H_P$, and hence $J\subseteq \bigcup_{\Lambda_i^P\in V(D_r)}\Lambda_i^P$. If $J\neq \bigcup_{\Lambda_i^P\in V(D_r)}\Lambda_i^P$, then there exists some $\Lambda_j^P\in V(D_r)$ such that $J$ and $\Lambda_j^P$  are adjacent in $G_P$.
\end{description}
\end{lem}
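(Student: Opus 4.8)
The plan is to prove the two assertions separately, in both cases exploiting the fact that $H_P$ is an \emph{induced} subgraph of $G_P$, so that any edge of $G_P$ joining two principal order ideals is automatically an edge of $H_P$. In particular, two principal order ideals lying in distinct connected components of $H_P$ are never adjacent in $G_P$. This trivial observation, together with Lemmas \ref{induced graph} and \ref{absorbtion}, is the engine behind assertion (1).

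For assertion (1), I would argue by contradiction, assuming $J_a$ and $J_b$ are adjacent, so that $J_a\cap J_b\neq\emptyset$ while neither contains the other. Picking $m\in J_a\cap J_b$ and writing $J_a=\bigcup_{i\in gs(J_a)}\Lambda_i^P$ and $J_b=\bigcup_{j\in gs(J_b)}\Lambda_j^P$, I can locate generators $i_0\in gs(J_a)$ and $j_0\in gs(J_b)$ with $m\in\Lambda_{i_0}^P\cap\Lambda_{j_0}^P$. Since $\Lambda_{i_0}^P$ is a vertex of $\chi_{J_a}$, hence of $D_r$, and $\Lambda_{j_0}^P$ is a vertex of $\chi_{J_b}$, hence of $D_s$, with $r\neq s$, these two principal ideals are not adjacent; as they intersect, they must be comparable, say $\Lambda_{i_0}^P\subsetneq\Lambda_{j_0}^P$ (the containment is strict, for equal principal ideals would be a single vertex, which cannot lie in two distinct components).

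The idea is then to upgrade this single containment to $J_a\subseteq J_b$ using Lemma \ref{absorbtion}. By Lemma \ref{induced graph} the graph $\chi_{J_a}$ is connected, and each of its vertices lies in $D_r$, hence is non-adjacent to $\Lambda_{j_0}^P\in D_s$; thus the connected order ideal $J=\Lambda_{j_0}^P$ is not adjacent to any vertex of the connected subgraph $C=\chi_{J_a}$. Since the vertex $\Lambda_{i_0}^P$ of $C$ satisfies $\Lambda_{i_0}^P\subsetneq\Lambda_{j_0}^P$, Lemma \ref{absorbtion} forces every vertex $\Lambda_p^P$ of $\chi_{J_a}$ to satisfy $\Lambda_p^P\subseteq\Lambda_{j_0}^P$, whence $J_a=\bigcup_{p\in gs(J_a)}\Lambda_p^P\subseteq\Lambda_{j_0}^P\subseteq J_b$. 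This contradicts $J_a\not\subseteq J_b$; the symmetric subcase $\Lambda_{j_0}^P\subsetneq\Lambda_{i_0}^P$ is treated identically with the roles of $a,b$ and $r,s$ interchanged, giving $J_b\subseteq J_a$. Either way, no such adjacency can exist.

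For assertion (2) I would run a cut argument modelled on the proof of Lemma \ref{induced graph}. Writing $U_r=\bigcup_{\Lambda_i^P\in V(D_r)}\Lambda_i^P$, the inclusion $J\subseteq U_r$ is immediate since every generator ideal $\Lambda_i^P$ with $i\in gs(J)$ is a vertex of $D_r$. Assuming $J\neq U_r$, I partition $V(D_r)$ into $T=\{\Lambda_i^P\in V(D_r)\mid \Lambda_i^P\subseteq J\}$ and its complement $S$; here $T$ is nonempty as it contains all generator ideals of $J$, and $S$ is nonempty because $J\subsetneq U_r$. Connectivity of $D_r$ produces an edge of $H_P$ between some $\Lambda_i^P\in T$ and some $\Lambda_j^P\in S$, so $\Lambda_i^P$ and $\Lambda_j^P$ are incomparable and intersect. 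It then remains to verify that $J$ is adjacent to this $\Lambda_j^P$: the intersection $J\cap\Lambda_j^P$ contains the nonempty set $\Lambda_i^P\cap\Lambda_j^P$ since $\Lambda_i^P\subseteq J$; one has $\Lambda_j^P\not\subseteq J$ because $\Lambda_j^P\in S$; and $J\not\subseteq\Lambda_j^P$ because $\Lambda_i^P\subseteq J$ while $\Lambda_i^P\not\subseteq\Lambda_j^P$. The main obstacle is assertion (1): the naive hope that distinct components of $H_P$ have disjoint unions is false, so the real content is that intersecting connected order ideals supported on different components are forced to be \emph{comparable}, and it is precisely the absorption step of Lemma \ref{absorbtion} that delivers this.
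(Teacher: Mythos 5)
Your proposal is correct and follows essentially the same route as the paper: for assertion (1) it locates intersecting principal ideals $\Lambda_{i_0}^P\in V(D_r)$, $\Lambda_{j_0}^P\in V(D_s)$ among the generators, deduces comparability from non-adjacency, and uses Lemma \ref{induced graph} together with Lemma \ref{absorbtion} applied to $\chi_{J_a}$ to absorb all of $J_a$ into $\Lambda_{j_0}^P\subseteq J_b$; for assertion (2) it uses exactly the paper's partition of $V(D_r)$ into the principal ideals contained in $J$ and those not, and the connectivity of $D_r$ to produce the required adjacent vertex. Your write-up is in fact slightly more explicit than the paper's at two points (checking the hypotheses of Lemma \ref{absorbtion} and verifying $J\not\subseteq\Lambda_j^P$), but there is no substantive difference.
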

\begin{proof}
 Let us first prove assertion (1). Suppose to the contrary that  $J_a$ and $J_b$ are adjacent in the graph $G_P$. Then $J_a\cap J_b\neq \emptyset$. Since
 $$J_a=\bigcup_{i\in gs(J_a)}\Lambda_i^P, \quad J_b=\bigcup_{j\in gs({J_b})}\Lambda_j^P,$$
  there exist some $i\in gs(J_a)$ and $j\in gs({J_b})$ such that $\Lambda_i^P\cap \Lambda_j^P\neq \emptyset$. Notice that $\Lambda_i^P$ is a vertex of the connected component $D_r$ and $\Lambda_j^P$ is a vertex of the connected component $D_s$, so $\Lambda_i^P$ and $\Lambda_j^P$ are not adjacent in the graph $H_P$. Since the graph $H_P$ is a vertex induced subgraph of $G_P$,  the order ideals $\Lambda_i^P$ and $\Lambda_j^P$ are also not adjacent in the graph $G_P$, hence they intersect trivially. Because $\Lambda_i^P\cap \Lambda_j^P\neq \emptyset$, we must  have $\Lambda_i^P\subset\Lambda_j^P$ or $\Lambda_j^P\subset\Lambda_i^P$. If $\Lambda_i^P\subset\Lambda_j^P$,  by  Lemmas \ref{induced graph} and  \ref{absorbtion} we obtain  that  for any $k\in gs({J_a})$, there is $\Lambda_k^P\subset \Lambda_j^P$.  Then,
\[J_a=\bigcup_{k\in gs({J_a})}\Lambda_k^P\subset \Lambda_j^P\subseteq J_b,\]
which implies  that $J_a$ and $J_b$ are not  adjacent in the graph $G_P$.
If $\Lambda_j^P\subset\Lambda_i^P$, we can use a similar argument to deduce that $J_a$ and $J_b$  are not  adjacent in the graph $G_P$. In both cases, we are led to a contradiction.

We proceed to prove assertion (2). Recall that $V(D_r)$ denotes the set of vertices of $D_r$. Assume that $gs(J)=\{i_1,\ldots,i_k\}$. Since $J\subseteq \bigcup_{\Lambda_i^P\in V(D_r)}\Lambda_i^P$ but $J\neq \bigcup_{\Lambda_i^P\in V(D_r)}\Lambda_i^P$,  there exists some  $\Lambda_j^P\in V(D_r)$ such that $\Lambda_j^P\nsubseteq  J$. Let
\begin{eqnarray*}
  V_1&=& \{\Lambda_i^P\in V(D_r)\mid \Lambda_i^P\subseteq J \}, \\
  V_2&=& \{\Lambda_j^P\in V(D_r)\mid \Lambda_j^P\nsubseteq  J \}.
\end{eqnarray*}
Clearly, we have $V_1\cup V_2=V(D_r)$ and  $V_2\neq \emptyset$. Since $\chi_J$ is a subgraph of $D_r$, we see that $V_1\neq \emptyset$. Because $D_r$ is a connected component of $H_P$, there exist some $\Lambda_i^P\in V_1$ and $\Lambda_j^P\in V_2$ such that $\Lambda_i^P$ and $\Lambda_j^P$ are adjacent in the graph $H_P$. Since $H_P$ is a vertex induced subgraph of $G_P$, the vertices $\Lambda_i^P$ and $\Lambda_j^P$ are also adjacent in $G_P$, which means that $\Lambda_i^P$ and $\Lambda_j^P$ intersect nontrivially, namely
  \[\Lambda_i^P \cap \Lambda_j^P\neq\emptyset,~~~\Lambda_i^P\not\subset \Lambda_j^P, \text{ and }\Lambda_j^P\not\subset \Lambda_i^P.\]
 In view of that $\Lambda_i^P\subseteq J$ and $\Lambda_j^P\in V_2$,  we get $J\neq \Lambda_j^P$ and
\[J \cap \Lambda_j^P\neq\emptyset,~~~J\not\subset \Lambda_j^P, \text{ and }\Lambda_j^P\not\subset J.\]
Hence $J$ is adjacent to $\Lambda_j^P$, as desired.
\end{proof}

With the above lemma, we can further obtain another property of $G_P$.

\begin{lem}\label{isolated vertex}
Let $C_r$ be a connected component of $G_P$ with vertex set $V(C_r)$.
Let $J$ be a connected order ideal with the graph $\chi_J$ as defined as above.
We have the following two assertions:
\begin{description}
  \item[(1)] Let $J^{max}_r$ denote the set $\bigcup_{J'\in V(C_r)}J'$. Then $J^{max}_r$ is an isolated vertex of the graph $G_P$.

   \item[(2)] If $\chi_J$ is a subgraph of $C_r$, and $J\neq J^{max}_r$, then $J$ is a vertex of $C_r$.
\end{description}
\end{lem}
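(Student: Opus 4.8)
The plan is to prove the two assertions separately, each by exploiting the absorption principle of Lemma \ref{absorbtion} together with the characterization, recalled in the proof of Lemma \ref{lemm-empty}, that two connected order ideals $A,B$ are non-adjacent in $G_P$ precisely when $A\cap B=\emptyset$, $A\subseteq B$, or $B\subseteq A$. Neither part will need Lemma \ref{intersect trivially} directly; the engine throughout is Lemma \ref{absorbtion}. For assertion (1) I would first check that $J^{max}_r=\bigcup_{J'\in V(C_r)}J'$ is itself a connected order ideal, hence a vertex of $G_P$. It is clearly an order ideal, being a union of order ideals; for connectedness I would pass to comparability graphs (a finite order ideal is connected if and only if its comparability graph is connected). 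The comparability graph of each $J'\in V(C_r)$ is connected, and whenever two vertices $J',J''$ of $C_r$ are adjacent they share an element, so the corresponding comparability graphs overlap. Since $C_r$ is connected, chaining these overlaps along paths shows that the union of all these comparability graphs, a spanning subgraph of the comparability graph of $J^{max}_r$, is connected, so $J^{max}_r$ is a connected order ideal.

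To see that $J^{max}_r$ is isolated I would argue by contradiction: suppose $J^{max}_r$ is adjacent to some connected order ideal $K$, so that $K\cap J^{max}_r\neq\emptyset$ while $K$ and $J^{max}_r$ are incomparable. I would then split on whether $K$ is adjacent to some vertex of $C_r$. If it is, then $K$ lies in the component $C_r$, whence $K\subseteq J^{max}_r$, contradicting incomparability. If it is not, then $K$ is non-adjacent to every vertex of $C_r$; picking $J_0\in V(C_r)$ with $J_0\cap K\neq\emptyset$ (such $J_0$ exists since $K\cap J^{max}_r\neq\emptyset$ and $J^{max}_r=\bigcup_{J'\in V(C_r)}J'$), the non-adjacency forces $K\subseteq J_0$ or $J_0\subseteq K$. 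The former gives $K\subseteq J_0\subseteq J^{max}_r$, again a contradiction; the latter (with $J_0\subset K$ proper, as $J_0=K$ would place $K$ in $C_r$) lets me invoke Lemma \ref{absorbtion} with the connected subgraph $C_r$ and the seed containment $J_0\subset K$ to conclude $J'\subset K$ for every $J'\in V(C_r)$, hence $J^{max}_r\subseteq K$, once more contradicting incomparability.

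For assertion (2), suppose $\chi_J$ is a subgraph of $C_r$ and $J\neq J^{max}_r$. Since the principal order ideals generating $J$ are then vertices of $C_r$, we get $J\subseteq\bigcup_{J'\in V(C_r)}J'=J^{max}_r$, and with $J\neq J^{max}_r$ this yields the proper inclusion $J\subsetneq J^{max}_r$. I would then argue that $J\notin V(C_r)$ is impossible: if $J\notin V(C_r)$, then $J$ cannot be adjacent to any vertex of $C_r$ (otherwise $J$ would lie in that component), so $J$ is non-adjacent to every vertex of $C_r$. Fixing a maximal element $i_1$ of $J$, the principal ideal $\Lambda_{i_1}^P$ is a vertex of $C_r$ with $\Lambda_{i_1}^P\subset J$ (proper, since $J\notin V(C_r)$), so Lemma \ref{absorbtion} applies and gives $J'\subset J$ for all $J'\in V(C_r)$, that is $J^{max}_r\subseteq J$, contradicting $J\subsetneq J^{max}_r$. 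Hence $J\in V(C_r)$.

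The main obstacle is the isolation step of assertion (1): the delicate point is to arrange the case distinction so that in each branch one lands in exactly the configuration required by Lemma \ref{absorbtion}, namely a connected subgraph, a non-adjacency hypothesis, and a seed containment, and to confirm that the resulting conclusion $J^{max}_r\subseteq K$ or $K\subseteq J^{max}_r$ genuinely contradicts the assumed incomparability. By comparison, the connectivity verification of $J^{max}_r$ is routine once one works with comparability graphs rather than Hasse diagrams, and assertion (2) is a direct application of the same absorption mechanism.
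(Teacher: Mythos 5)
Your proof is correct, and while assertion (1) follows essentially the paper's line, your treatment of assertion (2) is genuinely different and noticeably shorter. For (1), your connectivity argument (chaining overlapping comparability graphs along paths of $C_r$) is a reformulation of the paper's maximal-union argument — note only that the equivalence ``connected Hasse diagram iff connected comparability graph'' does hold here because $J'$ and $J^{max}_r$ are order ideals, so saturated chains between comparable elements stay inside them — and your isolation step is the paper's three-way case analysis (some $J_a\subset K$, some $K\subset J_a$, or all disjoint) recast as a contradiction, with Lemma \ref{absorbtion} doing the same absorbing work. For (2), the paper takes a longer route: it introduces the subgraph $H_P$, its components $D_k$, the classes $\mathcal{J}^k_{conn}(P)$, and both parts of Lemma \ref{intersect trivially}, largely because it wants to establish the identity \eqref{max J equal} (that $J^{max}_r$ equals the union of the principal ideals in the corresponding component of $H_P$) as a byproduct for later use in the Féray--Reiner application. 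You bypass all of that: from $\chi_J\subseteq C_r$ you get $J\subseteq J^{max}_r$, hence $J\subsetneq J^{max}_r$, and if $J\notin V(C_r)$ then $J$ is non-adjacent to every vertex of $C_r$ while containing the seed $\Lambda_{i_1}^P\in V(C_r)$ properly (properness is exactly because $J\notin V(C_r)$), so Lemma \ref{absorbtion} forces $J^{max}_r\subseteq J$, a contradiction. This is a clean, self-contained proof of the stated lemma; what it does not deliver is \eqref{max J equal}, which the paper's longer argument supplies for free.
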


\begin{proof}
Let us first prove assertion (1). It is clearly true when $|V(C_r)|=1$. Suppose $|V(C_r)|\geq 2$. We first prove that $J_r^{max}$ is a connected order ideal. Let $V$ be a set of connected order ideals and  assume $V$ satisfies the condition:
\begin{align}
 \mbox{$V\subseteq V(C_r)$ and $\bigcup_{J\in V}J$ is a connected order ideal.} \tag{*}
\end{align}
We claim that if $V$ satisfies (*) and is of the largest possible size, then $V$ must be equal to $V(C_r)$. Otherwise, suppose $V\subset V(C_r)$ but $V\neq V(C_r)$. Since $C_r$ is a connected graph and $|V(C_r)|\geq 2$, there exist some $J_a\in V$ and $J_b\in (V(C_r)\setminus V)$ such that $J_a$ and $J_b$ are adjacent in $G_P$. Hence $J_a\cap J_b\neq \emptyset$, and then $(\bigcup_{J\in V}J)\cap J_b\neq \emptyset$. It follows that the set $V'=V\cup \{J_b\}$ also satisfies the condition (*), and $|V'|=|V|+1$, contradicting the assumption that  $V$ is of the largest possible size.

We mow prove  that $J^{max}_r$ is not adjacent to any other vertex of $G_P$. For a $J\in \mathcal{J}_{conn}(P)$, if $J\in V(C_r)$, then $J \subset J^{max}_r$ and so $J$ and $J^{max}_r$ are not adjacent in $G_P$. If $J\notin V(C_r)$, namely, $J$ is not adjacent to any vertex of $C_r$,
we need to consider three cases:
\begin{itemize}
  \item[(i)] There exists some $J_a\in V(C)$ such that $J_a\subset J$. Then by Lemma \ref{absorbtion} we obtain that $J_b \subset J$ for any other $J_b\in V(C_r)$. Hence $J^{max}_r\subset J$, and it follows that $J$ and $J^{max}_r$ are not adjacent in $G_P$;
  \item[(ii)]   There exists some $J_a\in V(C)$ such that $ J\subset  J_a$. Then $J\subset J^{max}_r$, and as a consequence, $J$ and $J^{max}_r$ are also not adjacent in $G_P$;
  \item[(iii)] $J\cap J_a=\emptyset$ for any $J_a\in V(C_r)$. Then $J^{max}_r\cap J=\emptyset$ and, again, $\widetilde{J}$ and $J$ are  not adjacent in $G_P$.
\end{itemize}
Hence we conclude that $J^{max}_r$ is an isolated vertex of the graph $G_P$.

To prove assertion (2), we first analyse some general properties of $G_P$. Suppose the graph $H_P$ has $\ell$ connected components $D_1,D_2,\ldots,D_\ell$. Lemma \ref{induced graph} tells us that for any  connected order ideal $J'$, the graph $\chi_{J'}$ is  connected, and that it must be a subgraph of $D_k$ for some $1\leq k\leq \ell$.
For each $1\leq k\leq \ell$, let
\[\mathcal{J}_{conn}^k(P)=\{J\in \mathcal{J}_{conn}(P)\mid \text{ the graph } \chi_J \text{ is a subgraph of } D_k\}.\]
In particular, if $J'=\Lambda_i^P\in V(D_k)$ is a principal order ideal, then the graph $\chi_{J'}$ has only one vertex $\Lambda_i^P$, thus $\chi_{J'}$ is of course a subgraph of $D_k$.
It follows that $V(D_k)\subseteq \mathcal{J}^k_{conn}(P)$ for each $1\leq k\leq \ell$.  It is clear that
\[\mathcal{J}_{conn}(P)=\mathcal{J}^1_{conn}(P)\uplus\mathcal{J}^2_{conn}(P)\uplus\cdots\uplus\mathcal{J}^\ell_{conn}(P).\]
For each $1\leq k\leq \ell$, let $C_k$ be the connected component of $G_P$ such that $D_k$ is a subgraph of $C_k$ (it turns out that for each $D_k$, there exists a unique $C_k$ such that $D_k$ is a subgraph of $C_k$). We proceed to show that $V(C_k)\subseteq \mathcal{J}^k_{conn}(P)$. Note that if $J_a\in \mathcal{J}^s_{conn}(P)$ and $J_b\in \mathcal{J}^t_{conn}(P)$ for some $s\neq t$, the first assertion of  Lemma \ref{intersect trivially} tells us that $J_a$ and $J_b$ are not adjacent in $G_P$.
Thus, by the connectivity of $C_k$ in $G_P$, all members of $V(C_k)$ must belong to $\mathcal{J}^k_{conn}(P)$ since we already have $V(D_k)\subseteq \mathcal{J}^k_{conn}(P)$. And then, we get that
$V(D_k)\subseteq V(C_k)\subseteq \mathcal{J}^k_{conn}(P).$
That is to say, for any  $J'\in V(C_k)$, the graph $\chi_{J'}$ is a subgraph of $D_k$. Therefore, $J'\subseteq \bigcup_{\Lambda_i^P\in V(D_k)}\Lambda_i^P$  for any  $J'\in V(C_k)$. This leads to the following equality:
\begin{equation}\label{max J equal}
 J_k^{max}=\bigcup_{J'\in V(C_k)}J'=\bigcup_{\Lambda_i^P\in V(D_k)}\Lambda_i^P.
\end{equation}

For the given $J$,  we assume that $\chi_J$ is a subgraph of the connected component $D_r$  of $H_P$ for some $1\leq r\leq \ell$,  and then $D_r$ is  a subgraph of $C_r$. Thus in view of \eqref{max J equal},  when $J\neq J_r^{max}$, it follows that $J\neq \bigcup_{\Lambda_i^P\in V(D_r)}\Lambda_i^P$.
By the second assertion of Lemma \ref{intersect trivially}, in the graph $G_P$ we see that  $J$ is adjacent to some vertex of $D_r$,  therefore, $J$ is also a vertex of $C_r$.
\end{proof}

We are almost ready for the proof of Theorem \ref{def-descent}. Note that the definition of $\mathrm{Des}(M)$ ($M\in\mathscr{M}(G_P)$) is indirect, which uses the map $\Psi$ from $\mathscr{M}(G_P)$ to $\mathscr{F}(P)$. In order to make the proof of Theorem \ref{def-descent} more clear, we shall give another characterization of $\mathrm{Des}(M)$ which only uses the information of $M$.
Before doing this, we shall introduce one more notation. Given $J_a,J_b\in M$, we say that $J_a\prec_M J_b$ if $J_a\subset J_b$ and there exists no  $J\in M$ such that $J_a\subset J\subset J_b$. Our new characterization of $\mathrm{Des}(M)$ is as follows.

\begin{lem}\label{descent explaining}
Given $M\in\mathscr{M}(G_P)$, then $i\in \mathrm{Des}(M)$ if and only if there exists $j<i$ such that
$J_a\prec_M J_b$, where $J_a,J_b\in M$ are connected order ideals uniquely determined by $i,j$ respectively as in Lemma \ref{the principal ideal of F}.
\end{lem}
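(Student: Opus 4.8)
The plan is to unwind the definition of $\mathrm{Des}(M)$ and to translate the parent relation of the $P$-forest $F_M=\Psi(M)$ into the covering relation $\prec_M$ on $M$ ordered by set inclusion. By \eqref{descent set of M} we have $\mathrm{Des}(M)=\mathrm{Des}(F_M)$, and by \eqref{descent set of forest} an index $i$ lies in $\mathrm{Des}(F_M)$ precisely when $i$ has a parent $j$ in $F_M$ with $i>j$. Thus the whole statement reduces to identifying, for each node $i$ of $F_M$, when $i$ admits a parent and what that parent is, expressed through the inclusion order on $M$.

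First I would record the order-isomorphism underlying the construction of $F_M$. By Lemma \ref{the independent set K}, each node $i\in\{1,\ldots,n\}$ is matched to the unique $J_a\in M$ with $J_a\setminus\mu(M,J_a)=\{i\}$, and this matching is a bijection by \eqref{different J}. Moreover, by the definition of $F_M$ together with Lemma \ref{the principal ideal of F}, one has $i<_{F_M}j$ if and only if $J_a\subset J_b$, where $J_b$ is the ideal matched to $j$. Hence $i\mapsto J_a$ is an isomorphism from the poset $F_M$ onto $(M,\subset)$.

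Next I would transport covering relations across this isomorphism. Since an isomorphism of posets carries covers to covers, $j$ is the parent of $i$ in $F_M$ (that is, $i$ is covered by $j$) if and only if the matched ideals satisfy $J_a\subset J_b$ with no $J\in M$ obeying $J_a\subset J\subset J_b$, which is exactly the relation $J_a\prec_M J_b$. Combining this with the two definitions recalled above gives the chain: $i\in\mathrm{Des}(M)$ iff $i$ has a parent $j$ in $F_M$ with $i>j$, iff there exists $j<i$ with $J_a\prec_M J_b$, which is the desired characterization.

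This argument is essentially bookkeeping through the bijection $\Psi$; the only point requiring care is that the cover condition ``no $k$ with $i<_{F_M}k<_{F_M}j$'' among nodes of $F_M$ matches ``no $J\in M$ with $J_a\subset J\subset J_b$'', and this is guaranteed precisely because $i\mapsto J_a$ is a bijection of the ground sets respecting both orders. I expect no genuine obstacle beyond keeping the direction of the inclusion consistent with the parent/child roles: the child $i$ corresponds to the smaller ideal $J_a$ and the parent $j$ to the larger ideal $J_b$.
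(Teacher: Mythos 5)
Your proposal is correct and follows essentially the same route as the paper: unwind $\mathrm{Des}(M)=\mathrm{Des}(F_M)$, use Lemma \ref{the independent set K} and the construction of $F_M$ to see that $i\mapsto J_a$ identifies $F_M$ with $(M,\subset)$, and transport the parent (covering) relation to $\prec_M$. The paper states this more tersely without naming the order-isomorphism explicitly, but the content is identical.
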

\begin{proof}
By definition, $i\in \mathrm{Des}(M)=\mathrm{Des}(F_M)$ if and only if the parent of $i$, say $j$, is greater than $i$ with respect to the natural order on integers. Recall that if $j$ is the parent of $i$, then $i<_{F_M}j$ and there exists no $k$ such that $i<_{F_M}k<_{F_M}i$. It follows from Lemma \ref{the principal ideal of F} that there exist
two connected order ideals $J_a, J_b$ in $M$ satisfying $J_a\setminus \mu(M,J_a)=\{i\}, J_b\setminus \mu(M,J_b)=\{j\}$.
By the construction of $F_M$, we have $J_a\subset J_b$ but there exists no $J\in M$ such that $J_a\subset J\subset J_b$, namely $J_a\prec_M J_b$.
\end{proof}

As shown above, the  relation $\prec_M$ plays an important role for the new characterization of $\mathrm{Des}(M)$.
To prove Theorem \ref{def-descent}, we also need the following lemma, which is evident by definition. Recall that the set $U_{max}(M,J)$  is defined by \eqref{def-UMJ}.
\begin{lem}\label{cover in M}
Given $J_a,J_b\in M$, if $J_a\prec_M J_b$ then $J_a\in U_{max}(M,J_b)$.
\end{lem}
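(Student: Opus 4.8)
The plan is to unfold the two definitions involved and obtain the conclusion by a one-line contradiction, since nothing beyond the defining properties of $\prec_M$ and $U_{max}(M,J_b)$ is needed. First I would record the trivial half: from $J_a\prec_M J_b$ we have in particular $J_a\subset J_b$, so by the definition of $U(M,J_b)$ in \eqref{def-UMJ} we immediately get $J_a\in U(M,J_b)$. Thus $J_a$ is at least a candidate for membership in $U_{max}(M,J_b)$, and it remains only to check the maximality condition built into the definition of $U_{max}(M,J_b)$.

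Next I would verify that maximality by contradiction. Recall that, by the definition in \eqref{def-UMJ}, an element $J_a\in U(M,J_b)$ fails to lie in $U_{max}(M,J_b)$ precisely when there is some $K\in U(M,J_b)$ with $J_a\subset K$. So I would suppose such a $K$ exists. Since $K\in U(M,J_b)$ we have $K\in M$ together with $K\subset J_b$, and combining this with $J_a\subset K$ yields a chain
\[
J_a\subset K\subset J_b,\qquad K\in M.
\]
This directly contradicts the definition of $J_a\prec_M J_b$, which asserts that no $J\in M$ satisfies $J_a\subset J\subset J_b$. Hence no such $K$ exists, $J_a$ is maximal in $U(M,J_b)$ under proper inclusion, and therefore $J_a\in U_{max}(M,J_b)$, as claimed.

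There is essentially no obstacle here: the statement is, as noted before it, evident by definition, and the whole argument is the observation that an intermediate element $K$ strictly between $J_a$ and $J_b$ would serve simultaneously as a witness against $J_a\prec_M J_b$ and as an obstruction to $J_a\in U_{max}(M,J_b)$, so the two conditions are dual to one another. The only point requiring a moment of care is the convention that $\subset$ denotes proper inclusion throughout, so that the maximality clause in the definition of $U_{max}(M,J_b)$ is read correctly; with that convention fixed the proof is complete.
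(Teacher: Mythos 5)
Your proof is correct, and the paper itself gives no argument here — it simply remarks that the lemma is ``evident by definition'' — so your unfolding of the two definitions is exactly the intended (and essentially the only) argument. Nothing further is needed.
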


Now we are in the position to prove Theorem \ref{def-descent}. From now on we shall assume that $P$ is naturally labeled.

\noindent \textit{Proof of Theorem \ref{def-descent}.}
There are two cases to consider.

(1). The connected component $C_r$ has only one vertex, say $J_r$. Thus $M_r$ can only be the unique one maximum independent set $\{J_r\}$ of  $C_r$.
By Lemma \ref{the principal ideal of F}, we have $J_r\setminus\mu(M^1, J_r)=\{i\}$
for some $i\in \{1,2,\ldots,n\}$.
In this case, we first prove that
\begin{equation}\label{emptyset isolate vertex}
  \mathrm{Des}(M_r,M^1)=\mathrm{Des}(M_r,M^2)=\emptyset.
\end{equation}
Otherwise, suppose that $\mathrm{Des}(M_r,M^1)=\{i\}$. By the definition of $\mathrm{Des}(M_r,M^1)$, we have $i\in \mathrm{Des}(M^1)$.
By Lemma \ref{descent explaining}, there exist $j<i$ and $J\in M^1$ such that $J\setminus \mu(M^1,J)=\{j\}$ and $J_r\prec_{M^1} J$.

We proceed to show that it is impossible to have such a pair $(i,j)$.
Let us consider the order relation between $i$ and $j$ in the poset $P$. It cannot be $j<_P i$,  since $i\in J_r\subset J$  and Lemma \ref{the independent set K} tells us that $j$ is a maximal element of $J$. Then it might be $i<_P j$, or $i$ and $j$ are incomparable in $P$. Since $P$ is naturally labeled and $j<i$, it can not be $i<_P j$. Suppose that $i$ and $j$ are incomparable in $P$. Since $J_r\setminus\mu(M^1, J_r)=\{i\}$, it follows from  Lemma \ref{the independent set K} that $i$ is a maximal element of $J_r$. We  proceed to prove that $i$ is also a maximal elements of $J$.
To see this, it is enough to show that there exists no $k\in J$ satisfying $i<_P k$. Note that
\[J=\{j\}\cup \mu(M^1,J)=\{j\}\cup\left(\bigcup_{J'\in U(M^1,J)}J'\right)=\{j\}\cup \left(\bigcup_{J'\in U_{max}(M^1,J)}J'\right).\]
By Lemma \ref{cover in M}, the relation $J_r\prec_{M^1} J$ implies that $J_r\in U_{max}(M^1,J)$. Then there are three cases to consider:
\begin{itemize}
\item[(i)] If $k=j$, then $i$ and $k$ are incomparable in $P$;

\item[(ii)] If $k\in J_r$, in this case we have $k\leq_P i$, or $i$ and $k$ are incomparable in $P$, because $i$ is a maximal element of $J_r$;

\item[(iii)] If $k\in J'$ for some $J'\in U_{max}(M^1,J)$ but $J'\neq J_r$, we obtain that $i$ and $k$ are incomparable in $P$,  since by Lemma \ref{lemm-empty} we have $J'\cap J_r=\emptyset$, which implies that for any $u\in J_r$, $v\in J'$, $u$ and $v$ are incomparable in $P$.
\end{itemize}
Hence there exists no $k\in J$ such that $i<_P k$, i.e., $i$ is a maximal element of $J$.
It follows that $\{i,j\}\subseteq gs(J)$ and then the graphs $\chi_{J_r}$ and $\chi_{J}$ have a common vertex $\Lambda_i^P$. Then by Lemma \ref{induced graph}, the graphs $\chi_{J_r}$ and $\chi_{J}$ belong to the same connected component $C_s$ of $G_P$.
Hence $C_s$ has at least two vertices $\Lambda_i^P$ and $\Lambda_j^P$. By Lemma \ref{isolated vertex} and the hypothesis that $J_r$ is an isolated vertex of $G_P$,  we obtain $J_r=\bigcup_{J'\in V(C_s)}J'$ and $J\subseteq \bigcup_{J'\in V(C_s)}J'$. This contradicts with the assumption that $J_r\prec_{M^1} J$. Hence  $i$ and $j$ cannot be incomparable in $P$, a contradiction.

Since such a pair $(i,j)$ can not exist, it follows that $\mathrm{Des}(M_r,M^1)=\emptyset$. By using a similar argument, one can also prove that  $\mathrm{Des}(M_r,M^2)=\emptyset$.
Moreover, by the definition of  $\overline{\mathrm{Des}}(M_r,M)$,  it is clear that
\[ \overline{\mathrm{Des}}(M_r,M^1)=\overline{\mathrm{Des}}(M_r,M^2)=\emptyset.\]

(2). $C_r$ has at least two vertices. In this case, $M_r\subset V(C_r)$. By Lemma \ref{isolated vertex}, we see that  $J_r^{max}=\bigcup_{J'\in V(C_r)}J'$ is an isolated vertex of $G_P$. Hence  $J_r^{max}\in M$ holds for any maximum independent set of $G_P$, and in particular $J_r^{max}\in M^1$ as well as $J_r^{max}\in M^2$.

We first prove that for any $J\in M_r$ or $J=J_r^{max}$, 
\begin{equation}\label{J M_J }
J\setminus \mu(M^1,J)=J\setminus\mu(M^2,J).
\end{equation}
To see this, we partition the set $U(M^2,J)$ into two subsets $B_1$ and $B_2$, where
\begin{eqnarray*}
  B_1 &=& \{ J_1\in U(M^2,J)\mid J_1\in  V(C_r)\},\\[5pt]
  B_2 &=& \{ J_2\in U(M^2,J)\mid J_2\notin V(C_r)\}.
\end{eqnarray*}
Assume $J\setminus \mu(M^1,J)=\{j\}$.
We claim that  $j\notin J_2$
for any $J_2\in B_2$. Otherwise, suppose to the contrary that  there exists some $J_2\in B_2$ such that $j\in J_2$.
It follows from Lemma \ref{the independent set K} that $j\in gs (J)$.
On the other hand, since $J_2\subset J$, we obtain that $j\in gs(J_2)$.
Hence the graph $\chi_J$ and $\chi_{J_2}$ have a common vertex $\Lambda_j^P$. Then by Lemma \ref{induced graph} the graphs $\chi_J$ and $\chi_{J_2}$ belong to the same connected component of $G_P$. We proceed to show that $\chi_{J_2}$ is a subgraph of $C_r$. To see this, there are two cases to consider.
\begin{itemize}
  \item [(i)] Suppose that $J\in M_r\subset V(C_r)$ (then $J\neq J_r^{max}$), namely, $J$ is a vertex of the connected component $C_r$. It follows from the second assertion of Lemma \ref{isolated vertex} that $\chi_J$ and $J$ are contained in the same connected component $C_r$ of $G_P$. Hence both  $\chi_J$ and $\chi_{J_2}$ are subgraphs of $C_r$.

  \item [(ii)] Suppose that $J=J_r^{max}=\bigcup_{J'\in V(C_r)}J'$. Let $i\in gs(J)$ be a maximal element of $J$, then  there exists some $J'\in V(C_r)$ such that $i\in J'$. It follows that $i$ is also a maximal element of $J'$, namely,  $i\in gs(J')$. Hence the graphs  $\chi_J$ and $\chi_{J'}$ have at least one common vertex $\Lambda_i^P$, and then  $\chi_J$ and $\chi_{J'}$ belong to the same connected component of $G_P$.  The second assertion of Lemma \ref{isolated vertex} tells us that for any $J'\in V(C_r)$,   $\chi_{J'}$ and $J'$ are contained in the same connected component $C_r$ of $G_P$. Hence $\chi_J, \chi_{J'}$ and $\chi_{J_2}$ are all subgraphs of $C_r$.
\end{itemize}
On the other hand, because $J_2\subset J$, we have $J_2\neq J_r^{max}$. Then by the second assertion of Lemma \ref{isolated vertex} we get  $J_2\in V(C_r)$, leading to a contradiction. Hence the claim, that $j\notin J_2$ for any $J_2\in B_2$,  is true.

Recall that $M^1\cap V(C_r)=M^2\cap V(C_r)=M_r.$ It is routine to verify that
\[U(M^1,J)\cap M_r=U(M^2,J)\cap M_r=B_1,\]
Combining \eqref{eqn-umu} and the above identity, we get that
\[j\in J\setminus  \mu(M^1,J)\subseteq J\setminus\bigcup_{J_1\in B_1}J_1.\]
As we have shown that $j\notin J_2$ for any $J_2\in B_2$, so again by \eqref{eqn-umu} there holds
\[j\in J\setminus\bigcup_{J'\in (B_1\cup B_2)}J'=J\setminus\bigcup_{J'\in U(M^2,J)}J'=J\setminus \mu(M^2,J).\]
Thus, by Lemma \ref{the independent set K}, the set $J\setminus \mu(M^2,J)$ contains  exactly one element, which can only be $j$.  Therefore, we have
\[\{j\}=J\setminus \mu(M^2,J)=J\setminus \mu(M^1,J).\]

We proceed to show that $\mathrm{Des}( M_r,M^1)\subseteq \mathrm{Des}( M_r,M^2)$.
Let $i\in \mathrm{Des}( M_r,M^1)$, and by  the definition of  $\mathrm{Des}( M_r,M^1)$ and Lemma \ref{the independent set K} there exists  $J_a\in M_r$ such that $J_a\setminus \mu(M^1,J_a)=\{i\}$. By Lemma \ref{descent explaining}, there exist $j<i$ and $J_b\in M^1$ such that $J_b\setminus \mu(M^1,J_b)=\{j\}$ and $J_a\prec_{M^1} J_b$. We claim that $J_b\in V(C_r)$ or $J_b=J_r^{max}$. Suppose otherwise that $J_b$ is not a vertex of $C_r$ and  $J_b\neq J_r^{max}$. Since $J_a\in V(C_r)$ and $J_a\subset J_b$, it follows from Lemma \ref{absorbtion} that  $J'\subset J_b$ for any  $J'\in V(C_r)$. Hence $J_r^{max}\subset J_b$.  Thus we obtain $J_a\subset J_r^{max}\subset J_b$. Recall that $J_r^{max}\in M^1$, the relation $J_a\subset J_r^{max}\subset J_b$ contradicts the assumption that $J_a\prec_{M^1}J_b$. Recall also that we have shown  $J_r^{max}\in M^2$. If $J_b=J_r^{max}$ then $J_b\in M^2$. If $J_b\in V(C_r)$, then $J_b\in M_r=M^2\cap V(C_r)$, and hence also $J_b\in M^2$. We further show that $J_a\prec_{M^2} J_b$. Otherwise, suppose there exists some $ J_c\in M^2$ such that $J_a\subset J_c\subset J_b$. By the hypothesis that $J_a\prec_{M^1}J_b$ and $M^1\cap V(C_r)=M^2\cap V(C_r)=M_r$, it follows that $J_c\notin M_r\subset V(C_r)$. Then by Lemma \ref{absorbtion},  for any $J'\in V(C_r)$, there is $J'\subset J_c$. Hence $J_b\subseteq \bigcup_{J'\in V(C_r)}\subset J_c$, leading to a contradiction. Thus, for any $i\in \mathrm{Des}( M_r,M^1)$,  by \eqref{J M_J }  there exist  $J_a,J_b\in M^2$  such that  $J_a\setminus \mu(M^2,J_a)=\{i\}$, $J_b\setminus \mu(M^2,J_b)=\{j\}$, $J_a\prec_{M^2}J_b$ and $i>j$.  This means $i\in \mathrm{Des}( M_r,M^2)$ for any $i\in \mathrm{Des}( M_r,M^1)$. Hence $\mathrm{Des}( M_r,M^1)\subseteq \mathrm{Des}( M_r,M^2)$.

It can be proved in a similar way that $\mathrm{Des}( M_r,M^2)\subseteq \mathrm{Des}( M_r,M^1)$. So we get $\mathrm{Des}( M_r,M^1)=\mathrm{Des}( M_r,M^2).$ Combining this and \eqref{J M_J }, we further obtain $\overline{\mathrm{Des}}( M_r,M^1)=\overline{\mathrm{Des}}( M_r,M^2),$
as desired.
\qed

We proceed to prove Theorem \ref{the seconde main result}.

\noindent \textit{Proof of Theorem \ref{the seconde main result}.}
Given a maximum independent set $M$ of $G_P$, let
\[\overline{\mathrm{Des}}(M)=\big\{J\in M \mid  J\setminus\mu(M,J)=\{i\} \text{ for some } i\in \mathrm{Des}(M)\big\}.\]
Recall that $\mathscr{M}(C_r)$ is the set of maximum independent sets of $C_r$ for each $1\leq r\leq h$, respectively.
It is clear that $M$ admits the following natural decomposition:
$$M=M_1\uplus M_2\uplus\cdots\uplus M_h, \text{ where } M_r\in \mathscr{M}(C_r).$$
It follows from Theorem \ref{def-descent} that  both $\mathrm{Des}(M_r)$ and $\overline{\mathrm{Des}}(M_r)$ are well-defined, and hence
\begin{align}
  \mathrm{Des}(M)&=\mathrm{Des}(M_1)\uplus\mathrm{Des}(M_2)\uplus\cdots\uplus\mathrm{Des}(M_h), \label{the union descent set of M}\\
  \overline{\mathrm{Des}}(M)&=\overline{\mathrm{Des}}(M_1)\uplus\overline{\mathrm{Des}}(M_2)\uplus\cdots\uplus\overline{\mathrm{Des}}(M_h).\label{the union descent set of over  M}
\end{align}
Thus, by \eqref{F_P_FR}, Theorem \ref{The First Main Result} and Lemma \ref{the principal ideal of F}, we get that
\begin{eqnarray*}
  F_P(\textbf{x}) &=&   \sum_{M\in\mathscr{M}(G_P)}\frac{\prod_{J\in \overline{\mathrm{Des}}(M)}\prod_{k\in J}x_k}{\prod_{J\in M}(1-\prod_{\ell\in J}x_\ell)}.
\end{eqnarray*}
By \eqref{the union descent set of over  M}, we then have
\begin{eqnarray*}
  F_P(\textbf{x}) &=&  \sum_{M_1\in \mathscr{M}(C_1)} \sum_{M_2\in \mathscr{M}(C_2)}\cdots  \sum_{M_h\in \mathscr{M}(C_h)}\frac{\prod_{r=1}^h\prod_{J\in \overline{ \mathrm{Des}}(M_r)}\prod_{k\in J}x_k}{\prod_{r=1}^h\prod_{J\in M_r}(1-\prod_{\ell\in J}x_\ell)}\\
   &=&  \prod_{r=1}^h \sum_{M_r \in \mathscr{M}(C_r) }\frac{\prod_{J\in \overline{\mathrm{Des}}(M_r)}\prod_{k\in J}x_k}{\prod_{J\in M_r}(1-\prod_{\ell\in J}x_\ell)}. ~~~~~~~~~~~~~~~~~~~~~~~~~~~~~~~~~~~\qed
\end{eqnarray*}

We would like to point out that Theorem \ref{the seconde main result} enables us to give an alternative proof to  F\'eray and Reiner's formula  \eqref{formula for forests with duplications}. To this end, let $P$ be a naturally labeled forest with duplications as defined by F\'eray and Reiner \cite{FerayReiner}, namely,  for any connected order ideal $J_a$ of $P$, there exists at most one other connected order ideal $J_b$ such that $J_a$ and $J_b$  intersect nontrivially.
Assume that $G_P$ has $h$ connected components $C_1,C_2,\ldots,C_h$.
Then each $C_r$ has at most two vertices, and hence each connected component of $H_P$  has also at most two vertices.

We claim that when a connected component $C$ of $G_P$ has two vertices, say $J_a$ and $J_b$, then both $J_a$ and $J_b$ are principal order ideals of $P$. Otherwise, suppose that $J_a$ is not a principal order ideal of $P$. Then the graph $\chi_{J_a}$ has more than one vertices. Recall that $\chi_{J_a}$ is a subgraph of $H_P$. By Lemma \ref{induced graph} and the fact that  each connected component of the graph $H_P$ has at most two vertices, the graph $\chi_{J_a}$ is a connected component of $H_P$.
It then follows from  \eqref{max J equal}  and the first assertion of Lemma \ref{isolated vertex} that $J_a$ is an isolated vertex of $G_P$, a contradiction. Similarly, $J_b$ is also a principal order ideal of $P$.

Therefore, we may assume that for $1\leq r\leq d$ the component  $C_r$ has two vertices (both of them are principal order ideals of $P$), say $\Lambda_{i_r}^P$ and $\Lambda_{j_r}^P$, and for $d< r\leq h$ the component $C_r$ has only one vertex. Thus, for $1\leq r\leq d$,
there are two choices for $ M_r$, namely, $M_r=\{\Lambda_{i_r}^P\}$ or $M_r=\{\Lambda_{j_r}^P\}.$
We assume that $i_r>j_r$. Then
\[\overline{\mathrm{Des}}(\{\Lambda_{i_r}^P\})=\Lambda_{i_r}^P,~~\overline{\mathrm{Des}}(\{\Lambda_{j_r}^P\})=\emptyset.\]
For $d<r\leq h$, let $J_r$ be the only vertex   of $C_r$, and then $\overline{\mathrm{Des}}(\{J_r\})=\emptyset$.
By Theorem \ref{the seconde main result}, we obtain that
\begin{eqnarray*}\label{specialize for forests with duplications}
  \nonumber F_P(\textbf{x})  &=& \prod_{1\leq r\leq d}\left[\frac{\textbf{x}^{\Lambda_{i_r}^P}}{\left(1-\textbf{x}^{\Lambda_{i_r}^P}\right)}
  +\frac{1}{\left(1-\textbf{x}^{\Lambda_{j_r}^P}\right)}\right]\prod_{d< r\leq h}\frac{1}{\left(1-\textbf{x}^{J_r}\right)}\\
   &=& \prod_{1\leq r\leq d}\left[\frac{1-\textbf{x}^{\Lambda_{i_r}^P}\textbf{x}^{\Lambda_{j_r}^P}}
   {\left(1-\textbf{x}^{\Lambda_{i_r}^P}\right)
   \left(1-\textbf{x}^{\Lambda_{j_r}^P}\right)}\right]
   \prod_{d<r\leq h}\frac{1}{\left(1-\textbf{x}^{J_r}\right)},
\end{eqnarray*}
where  $\textbf{x}^A=\prod_{i\in A}x_i$ for  a subset $A\subseteq \{1,2,\ldots,n\}$. It is straightforward to verify that the above formula is equivalent to \eqref{formula for forests with duplications}.

\section{Counting linear extensions}\label{sect-application}
In this section, we take an example to show that formula \eqref{the product formula for F_P} can be used to derive the generating function of major index of linear extensions of $P$, as well as to count the number $|\mathcal{L}(P)|$ of linear extensions of $P$.

The generating function $F_P(q)$ of major index of linear extensions of $P$ is denoted by $F_P(q)=\sum_{w\in\mathcal{L}(P)}q^{\mathrm{maj}(w)},$
where $\mathrm{maj}(w)=\sum_{i\in\mathrm{Des}(w)}i$ is called the major index of $w$. By letting  $x_1=\cdots=x_n=q$ respectively in  \eqref{Stanley-formula for F_P} and \eqref{the product formula for F_P}, we are led to the following identity
\begin{equation}\label{q-major index}
  F_P(q)=[n]!_q\prod_{r=1}^h\sum_{ M_r\in \mathscr{M}(C_r)}\frac{q^{\sum_{J\in \overline{\mathrm{Des}}(M_r)}|J|}}{\prod_{J\in M_r} [|J|]_q},
\end{equation}
where $M_r$ ranges over maximum independent sets of $C_r$, $[i]_q=1-q^i$ for any $i$ and $[m]!_q=\prod_{i=1}^m [i]_q$.

Moreover, when  $q$ tends  to $1$ on both sides of \eqref{q-major index}, we
arrive at the following formula for the number of linear extensions of $P$:
\begin{equation}\label{count linear extensions}
  |\mathcal{L}(P)|=n!\prod_{r=1}^h\sum_{ M_r\in \mathscr{M}(C_r)}\frac{1}{\prod_{J\in M_r}|J|}.
\end{equation}
Note that the number of linear extensions of $P$ is independent of the labelling of $P$. Thus formula \eqref{count linear extensions} is also valid  in the cases when $P$ is not naturally labeled.

We would like to mention that calculating the number of linear extensions  for  general posets  has been proved to be a $\sharp P$-hard problem by Brightwell and Winkler \cite{BrightwellWinkler}. However, in the case when $P$ is a poset such that  each connected component $C_r$ of $G_P$ has small size of vertex set, we shall illustrate that    formula \eqref{count linear extensions} provides an efficient way to count the number  of linear extensions of $P$.
For example, take the naturally labeled poset $P$  in Figure \ref{The poset P}.
 From the graph of $G_P$ as illustrated in Figure \ref{The graph G_P}, we obtain that
\begin{enumerate}
  \item For the connected component $C_1$, there are $6$ choices  for $ M_1$:

  \begin{tabular}{|c|c|c|c|c|}
    \hline
    $M_1$ & $\{\Lambda_4^P,\Lambda^P_{4,5}\}$  &$\{\Lambda_4^P,\Lambda_{4,6}^P\}$  &  $ \{\Lambda_5^P,\Lambda_{4,5}^P\}$ &  $\{\Lambda_5^P,\Lambda_{5,6}^P\}$  \\
    \hline
     $\mathrm{Des}(M_1)$& $\emptyset$ & $\{6\}$& \{5\} &  \{6\}    \\
    \hline
     $\overline{\mathrm{Des}}(M_1)$& $\emptyset$ &$\{\Lambda_{4,6}^P\}$  & $\{\Lambda_5^P\}$  &  $\{\Lambda_{5,6}^P\}$ \\
    \hline
  \end{tabular}

   \begin{tabular}{|c|c|c|}
    \hline
    $M_1$ & $\{\Lambda_6^P,\Lambda_{4,6}^P\}$  & $\{\Lambda_6^P,\Lambda_{5,6}^P\}$ \\
    \hline
     $\mathrm{Des}(M_1)$&  \{6\}  &\{5,6\}  \\
    \hline
     $\overline{\mathrm{Des}}(M_1)$& $\{\Lambda_6^P\}$  &  $\{\Lambda_6^P,\Lambda_{5,6}^P\}$  \\
    \hline
  \end{tabular}

  \item For the connected component $C_2$, there are $5$ choices  for $ M_2$:

    \begin{tabular}{|c|c|c|c|}
    \hline
    $M_2$ & $\{\Lambda_{10}^P,\Lambda_{15}^P,\Lambda_{13,15}^P\}$  &  $\{\Lambda_{10}^P,\Lambda_{10,13}^P,\Lambda_{14}^P\}$ &  $\{\Lambda_{10}^P,\Lambda_{10,13}^P,\Lambda_{13,15}^P\}$
        \\
    \hline
     $\mathrm{Des}(M_2)$& $\{15\}$ & $\emptyset$  &  $\{15\}$   \\
    \hline
     $\overline{\mathrm{Des}}(M_2)$& $\{\Lambda_{15}^P\}$  &  $\emptyset$ & $\{\Lambda_{13,15}^P\}$     \\
    \hline
  \end{tabular}

  \begin{tabular}{|c|c|c|}
    \hline
    $M_2$ &  $\{\Lambda_{13}^P,\Lambda_{10,13}^P,\Lambda_{14}^P\}$ &  $ \{\Lambda_{13}^P,\Lambda_{10,13}^P,\Lambda_{13,15}^P\}$ \\
    \hline
     $\mathrm{Des}(M_2)$& $\{13\}$  &   $\{13,15\}$    \\
    \hline
     $\overline{\mathrm{Des}}(M_2)$& $\{\Lambda_{13}^P\}$ &  $\{\Lambda_{13}^P,\Lambda_{13,15}^P\}$     \\
    \hline
  \end{tabular}

  \item  For  the connected component $C_3$, there are $3$ choices  for $M_3$:

  \begin{tabular}{|c|c|c|c|}
    \hline
    $M_3$ &$\{\Lambda_{11}^P,\Lambda_{11,9}^P\}$  &  $\{\Lambda_{9}^P,\Lambda_{11,9}^P\}$&  $\{\Lambda_{9}^P,\Lambda_{12}^P\}$
        \\
    \hline
     $\mathrm{Des}(M_3)$& $\{11\}$ & $\emptyset$ &  $\{12\}$   \\
    \hline
     $\overline{\mathrm{Des}}(M_3)$& $\{\Lambda_{11}^P\}$ & $\emptyset$ & $\{\Lambda_{12}^P\}$    \\
    \hline
  \end{tabular}

  \item For the connected component  $C_4$, there are $2$ choices for $ M_4$:

   \begin{tabular}{|c|c|c|}
    \hline
    $M_4$ &   $\{\Lambda_{16}^P\}$ &  $\{\Lambda_{17}^P\}$ \\
    \hline
     $\mathrm{Des}(M_4)$& $\emptyset$ &   $\{17\}$    \\
    \hline
     $\overline{\mathrm{Des}}(M_4)$& $\emptyset$ &  $\{\Lambda_{17}^P\}$      \\
    \hline
  \end{tabular}

  \item For connected  components which have only one vertex,  each of them has only one choice for each $M_r$,  and $\mathrm{Des}(M_r)=\emptyset$ as well as $\overline{\mathrm{Des}}(M_r)=\emptyset$.
\end{enumerate}
Therefore, invoking  formula \eqref{q-major index}, we see that $ F_P(q)=\sum_{w\in\mathcal{L}(P)}q^{\mathrm{maj}(w)}$ equals
\begin{eqnarray*}
{[17]!}_q \bigg[\frac{1}{{[6]}_q}\left( \frac{1+2q^3+2q^5+q^8}{{[3]}_q{[5]}_q}\right)\bigg]\bigg[\frac{1}{{[15]}_q}\bigg(\frac{q^{13}+1+q^{14}}{{[7]}_q{[13]}_q{[14]}_q}+\frac{q^{12}+q^{26}}{{[12]}_q{[13]}_q{[14]}_q}\bigg)\bigg]  \\
   \times \bigg[\frac{1}{{[5]}_q}\bigg(\frac{q^3}{{[3]}_q{[4]}_q}+\frac{1}{{[2]}_q{[4]}_q}+\frac{q^3}{{[2]}_q{[3]}_q}\bigg)\bigg]\bigg[\frac{1}{{[17]}_q}\frac{(1+q^{16})}{{[16]}_q}\bigg]\times 1^5.
\end{eqnarray*}
Letting $q\rightarrow 1$ in the above formula, we arrive at
 \begin{eqnarray*}
 |\mathcal{L}(P)| &=& 17!\times\left( \frac{1}{6}\times\frac{6}{3\times 5}\right)\times\left[\frac{1} {15}\times\left(\frac{3}{7\times 13\times 14}+\frac{2}{13\times 12\times 14}\right)\right] \\[5pt]
   &&\ \  \times\left[\frac{1}{5}\times\left(\frac{1}{3\times 4}+\frac{1}{3\times 2}+\frac{1}{4\times 2}\right)\right] \times \left( \frac{1}{17}\times\frac{2}{16}\right)\times 1^5 \\[5pt]
   &=&  2851200.
\end{eqnarray*}
This coincides with the result by listing all linear extensions by using Sage \cite{Sage}.

\vskip 3mm \noindent {\bf Acknowledgments.}
Our deepest gratitude goes to the anonymous reviewer(s) for their careful work and thoughtful suggestions that have helped improve this paper substantially.
This work was supported by the PCSIRT Project of the Ministry of Education and the National Science Foundation of China.


\begin{thebibliography}{10}

\bibitem{BjonerWachs}
A. Bj\"orner and M.L. Wachs, $q$-Hook length formulas for forests, J.  Combin. Theory Ser. A 52 (1989), 165--187.

\bibitem{BoussicaultFerayLascouxReiner}
A. Boussicault, V. F\'eray, A. Lascoux  and V. Reiner, Linear extension sums as valuations
on cones, J. Algebraic Combin. 35 (2012), 573--610.

\bibitem{BrightwellWinkler}
G. Brightwell and P. Winkler, Counting linear extensions, Order 8 (1991), 225--242.




\bibitem{FerayReiner}
V. F\'eray and V. Reiner, $P$-partitions revisited, J. Commut.
Algebra 4 (2012), 101--152.

\bibitem{Gessel}
I.M. Gessel, A historical survey of $P$-partitions, arXiv:1506.03508.

\bibitem{Postnikov}
A. Postnikov, Permutahedra, associahedra, and beyond, Int. Math. Res. Not. (2009), 1026--1106.


\bibitem{PostnikovReinerWilliams}
A. Postnikov, V. Reiner and L. Williams, Faces of  generalized permutohedra, Doc. Math. 13 (2008), 207--273.


\bibitem{Stanley-OrderedStructures}
R. Stanley, Ordered structures and partitions,  Mem. Amer. Math. Soc. 119, 1972.

\bibitem{EC}
R. Stanley, Enumerative Combinatorics, Vol. 1, 2nd edition, Cambridge University
Press, 2012.

\bibitem{Sage}
W.A. Stein et al. Sage Mathematics Software (Version 5.12), The Sage Development Team,
2009, http://www.sagemath.org.


\end{thebibliography}
\end{document}